\numberwithin{equation}{section}
\theoremstyle{plain}
\newtheorem{maintheorem}{Theorem}
\newtheorem{maincorollary}[maintheorem]{Corollary}
\newtheorem{theorem}{Theorem}[section]
\newtheorem{proposition}[theorem]{Proposition}
\newtheorem{corollary}[theorem]{Corollary}
\newtheorem{lemma}[theorem]{Lemma}
\theoremstyle{definition}
\newtheorem{remark}[theorem]{Remark}
\newtheorem{definition}{Definition}
\newtheorem{example}{Example}
\newcommand{\vep}{\varepsilon}
\renewcommand{\epsilon}{\varepsilon}
\newcommand{\intt}{\operatorname{int}}
\newcommand{\Trans}{\operatorname{Trans}}
\begin{document}


\title[Sensitivity and historic behavior on Baire spaces]{Sensitivity and historic behavior for continuous \\ maps on Baire metric spaces}

\author{M. Carvalho}
\author{V. Coelho}
\author{L. Salgado}
\author{P. Varandas}

\address{Maria Carvalho, CMUP \& Departamento de Matem\'atica, Faculdade de Ci\^encias da Universidade do Porto \\
Rua do Campo Alegre 687, 4169-007 Porto, Portugal.}
\email{mpcarval@fc.up.pt}

\address{Vin\'icius Coelho, Universidade Federal do Oeste da Bahia, Centro Multidisciplinar de Bom Jesus da Lapa\\
Av. Manoel Novais, 1064, Centro, 47600-000 - Bom Jesus da Lapa-BA-Brazil}
\email{viniciuscs@ufob.edu.br}

\address{Luciana Salgado, Universidade Federal do Rio de Janeiro, Instituto de
   Matem\'atica\\
   Avenida Athos da Silveira Ramos 149 Cidade Universit\'aria, P.O. Box 68530,
   21941-909 Rio de Janeiro-RJ-Brazil }
 \email{lsalgado@im.ufjr.br, lucianasalgado@ufrj.br}

\address{Paulo Varandas, CMUP, Universidade do Porto \& Departamento de Matem\'atica, Universidade Federal da Bahia\\
Av. Ademar de Barros s/n, 40170-110 Salvador, Brazil}
\email{paulo.varandas@ufba.br}

\subjclass[2010]{Primary: 37A30, 37C10, 37C40, 37D20.}

\keywords{Baire metric space; Historic behavior; Irregular set; Transitivity;
Sensitivity, Group and semigroup actions}


\begin{abstract}
We introduce a notion of sensitivity, with respect to a continuous bounded observable, which provides a sufficient condition for a continuous map, acting on a Baire metric space, to exhibit a Baire generic subset of points with historic behavior.
The applications of this criterion recover, and extend, several known theorems on the genericity of the irregular set, besides yielding a number of new results, including information on the irregular set of geodesic flows, in both negative and non-positive curvature, and semigroup actions.
\end{abstract}


\date{\today}
\maketitle


\section{Introduction} \label{sec:statement-result1}

\subsection{Historic behavior}

In what follows, we shall write $X$ to denote a compact metric space and $Y$ will stand for an arbitrary metric space. Given such a space $Y$ and $A \subset Y$, denote by $A'\subset S$ the set of non-isolated accumulation points in $A$, that is, $y \in A'$ if and only if $y$ belongs to the closure $\overline{A \setminus \{y\}}$.
Let $C(Y, \mathbb{R})$ be the set of real valued, continuous maps on $Y$ and $C^{b}(Y,\mathbb{R})$ be its subset of bounded elements endowed with the supremum norm $\|\cdot\|_\infty$.

Given a Baire metric space $(Y, d)$, a continuous map $T : Y \to Y$ and $\varphi \in C^b(Y,\mathbb R)$, the set of \emph{$(T,\varphi)$-irregular points}, or points with historic behavior, is defined by
\begin{center}
${\mathcal I}(T,\varphi) =  \Big\{ x \in X : \Big(\lim \limits_{n \,\to \,+\infty} \frac{1}{n} \sum \limits_{j=0}^{n-1}  \varphi (T^{j}(x)) \Big)_{n \,\in \,\mathbb{N}} $ does not converge$\Big\}$.
\end{center}
The Birkhoff's ergodic theorem ensures that, for any Borel $T$-invariant probability measure $\mu$ and every $\mu$-integrable observable $\varphi: Y \to \mathbb R$, the sequence of averages $\big(\frac1n \sum_{j=0}^{n-1}\varphi (T^j(y))\big)_{n \, \in \, \mathbb{N}}$ converges at $\mu$-almost every point in $Y$.
So, the set of $(T,\varphi)$-irregular points is negligible with respect to any $T$-invariant probability measure. In the last decades, though, there has been an intense study concerning the set of points for which Ces\`aro averages do not converge. Contrary to the previous measure-theoretical description, the set of the irregular points may be Baire generic and, moreover, have full topological pressure, full metric mean dimension or full Hausdorff dimension (see \cite{AP19, BKNRS, BLV, BS, LV, PS07, TV17}).
In \cite{CV21}, the first and the fourth named authors obtained a simple and unifying criterion, using first integrals, to guarantee that ${\mathcal I}(T,\varphi)$ is Baire generic whenever $T:X \to X$ is a continuous dynamics acting on a compact metric space $X$. More precisely, given $\varphi \in C(X, \mathbb{R})$, consider the map $L_{\varphi}: X \to \mathbb R$ defined by
\begin{equation}\label{eq:firstintegral}
x \in X \quad \mapsto \quad L_{\varphi}(x) = \limsup \limits_{n\to\infty} \frac{1}{n} \sum \limits_{j=0}^{n-1} \varphi \circ T^{j} (x).
\end{equation}
This is a first integral with respect to the map $T$, that is, $L_\varphi \circ T = L_\varphi$. The existence of dense sets of discontinuity points for this first integral turns out to be a sufficient condition for the genericity of the historic behavior.

\begin{theorem}\cite[Theorem A]{CV21} \label{theoremCV}
Let $(X, d)$ be a compact metric space, $T : X \to X$ be a continuous map and $\varphi : X \to \mathbb{R}$ be a continuous observable. Assume that there exist two dense subsets $A,B\subset X$ such that the restrictions of $L_{\varphi}$ to $A$ and to $B$ are constant, though the value at $A$ is different from the one at $B$. Then ${\mathcal I}(T,\varphi)$ is a Baire generic subset of $X$.
\end{theorem}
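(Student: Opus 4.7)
The plan is to express the irregular set as containing a countable intersection of dense open sets and then invoke the Baire category theorem, using the hypothesis to verify density through the subsets $A$ and $B$.

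First, without loss of generality, assume $\alpha := L_\varphi|_A < L_\varphi|_B =: \beta$, and pick constants $\gamma_1, \gamma_2$ with $\alpha < \gamma_1 < \gamma_2 < \beta$. For each $n \in \mathbb{N}$, write $S_n\varphi(x) = \sum_{j=0}^{n-1}\varphi(T^j(x))$, and define
$$U_n \,=\, \Bigl\{ x \in X \,:\, \tfrac{1}{n} S_n\varphi(x) < \gamma_1 \Bigr\}, \qquad V_n \,=\, \Bigl\{ x \in X \,:\, \tfrac{1}{n} S_n\varphi(x) > \gamma_2 \Bigr\}.$$
Since $\varphi$ is continuous, each $S_n\varphi$ is continuous, so $U_n$ and $V_n$ are open. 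I would then consider the two $G_\delta$ sets
$$\mathcal{U} \,=\, \bigcap_{N \geq 1}\, \bigcup_{n \geq N} U_n, \qquad \mathcal{V} \,=\, \bigcap_{N \geq 1}\, \bigcup_{n \geq N} V_n,$$
which consist of points whose Birkhoff averages are smaller than $\gamma_1$ (respectively larger than $\gamma_2$) for infinitely many $n$. Any $x \in \mathcal{U}\cap \mathcal{V}$ satisfies $\liminf_n \tfrac{1}{n}S_n\varphi(x) \leq \gamma_1 < \gamma_2 \leq \limsup_n \tfrac{1}{n}S_n\varphi(x)$, so the Ces\`aro averages of $\varphi$ at $x$ fail to converge; consequently $\mathcal{U} \cap \mathcal{V} \subset \mathcal{I}(T,\varphi)$.

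It remains to check that both $\mathcal{U}$ and $\mathcal{V}$ are dense in $X$, and for this the hypothesis on $A$ and $B$ is exactly what is needed. Indeed, if $x \in A$ then $\limsup_n \tfrac{1}{n}S_n\varphi(x) = L_\varphi(x) = \alpha < \gamma_1$, so the inequality $\tfrac{1}{n}S_n\varphi(x) < \gamma_1$ must hold for all but finitely many $n$, which gives $x \in \mathcal{U}$. Thus $A \subset \mathcal{U}$, and since $A$ is dense in $X$, so is $\mathcal{U}$. By the symmetric argument, if $x \in B$ then $\limsup_n \tfrac{1}{n}S_n\varphi(x) = \beta > \gamma_2$, so $\tfrac{1}{n}S_n\varphi(x) > \gamma_2$ for infinitely many $n$; hence $B \subset \mathcal{V}$, and $\mathcal{V}$ is dense as well.

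As $X$ is a compact (in particular Baire) metric space and the open sets $\bigcup_{n \geq N} U_n$ and $\bigcup_{n \geq N} V_n$ are dense for every $N$, the Baire category theorem yields that $\mathcal{U} \cap \mathcal{V}$ is a dense $G_\delta$ subset of $X$, and the inclusion $\mathcal{U} \cap \mathcal{V} \subset \mathcal{I}(T,\varphi)$ completes the proof. The argument is mostly a packaging exercise: the only delicate point is choosing two distinct thresholds $\gamma_1 < \gamma_2$ strictly between $\alpha$ and $\beta$ so that each threshold can be witnessed (from below in $A$ and from above in $B$) by the value of the upper average $L_\varphi$ alone, despite the asymmetry between $\limsup$ and $\liminf$ in the definition of the irregular set.
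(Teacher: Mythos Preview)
Your proof is correct. The handling of the asymmetry between the two dense sets---using that $L_\varphi|_A=\alpha<\gamma_1$ forces \emph{eventually all} averages below $\gamma_1$, while $L_\varphi|_B=\beta>\gamma_2$ only forces \emph{infinitely many} averages above $\gamma_2$---is exactly the point that needs care, and you treat it cleanly.

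The route, however, differs from the one the paper uses to establish the more general Theorem~\ref{CV_mod} (of which Theorem~\ref{theoremCV} is a special case). There the argument works on the complement: one introduces the closed ``Cauchy-type'' sets
\[
\Lambda_N=\big\{y:\ |\varphi_n(y)-\varphi_m(y)|\le\eta\ \text{for all }m,n\ge N\big\}
\]
and shows by contradiction, using the dense sets $A$ and $B$, that each $\Lambda_N$ has empty interior; since the regular set is contained in $\bigcup_N\Lambda_N$, it is meagre. Your approach instead builds the irregular set directly from two one-sided threshold $G_\delta$'s $\mathcal U$ and $\mathcal V$. This is precisely the decomposition the paper employs later, in the proof of Corollary~\ref{maincorollary_sieve}, to show that $I_\varphi[\ell_\varphi^*,L_\varphi^*]$ is Baire generic. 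The $\Lambda_N$ argument has the advantage of working verbatim for an arbitrary $\Phi$-sensitive sequence (no Ces\`aro structure needed) and of yielding the quantitative conclusion in Remark~\ref{remark.CV_mod}; your threshold argument is slightly more transparent for the specific statement at hand and gives an explicit oscillation gap $\gamma_2-\gamma_1$.
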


The assumptions of the previous theorem are satisfied by a vast class of continuous maps on compact metric spaces, including minimal non-uniquely ergodic homeomorphisms, non-trivial homoclinic classes, continuous maps with the specification property, Viana maps and partially hyperbolic diffeomorphisms (cf. \cite{CV21}).

In this work we establish a criterion with a wider scope than the one of Theorem~\ref{theoremCV}. It applies to Baire metric spaces and general sequences of bounded continuous real-valued maps, rather than just Ces\`aro averages, subject to a weaker requirement than the one demanded in the previous theorem. In particular, one obtains new results on the irregular set of several classes of maps and flows, which comprise geodesic flows on certain non-compact Riemannian manifolds, countable Markov shifts and endomorphisms with two physical measures exhibiting intermingled basins of attraction. Regarding semigroup actions, we note that irregular points for group actions with respect to Ces\`aro averages were first studied in \cite{FV}. We will provide  additional information on the Baire genericity of irregular sets for averages that take into account the group structure.
We refer the reader to Sections~\ref{sec:examples} and \ref{sec:applications} for the precise statements.

In the next subsections we will state our main definitions and results.


\subsection{Sensitivity and genericity of historic behavior}

Given a metric space $(Y,d)$, a sequence $\Phi = (\varphi_n)_{n \, \in \mathbb{N}} \in C^b(Y,\mathbb R)^{\mathbb N}$ and $y \in Y$, let $W_{\Phi}(y) :=  \big\{\varphi_{n}(y):  \,\, n \in \mathbb{N} \big\}'$ denote the set of accumulation points of the sequence $(\varphi_{n}(y))_{n\, \in \,\mathbb{N}}.$
The next notion is inspired by the concept of sensitivity to initial conditions.

\begin{definition}\label{defTphisensitive}
\emph{Let $(Y,d)$ be a metric space and $\Phi \in C^b(Y,\mathbb R)^{\mathbb N}$. We say that $Y$ is $\Phi$-\emph{sensitive} (or sensitive with respect to the sequence $\Phi$) if there exist dense subsets $A,\,B \subset Y$, where $B$ can be equal to $A$, and $\varepsilon > 0$ such that for any $(a,b) \in A \times B$ one has
$$\sup_{r\,\in \,W_{\Phi}(a), \,\, s \,\in \,W_{\Phi}(b)} \,\,|r-s| > \varepsilon.$$
In the particular case of a sequence $\Phi$ of Ces\`aro averages $(\varphi_{n})_{n\, \in \,\mathbb{N}} = \big(\frac{1}{n} \,\sum_{j=0}^{n-1} \,\varphi\circ T^{j}\big)_{n \, \in \, \mathbb{N}}$ associated to a potential $\varphi \in C^b(Y,\mathbb R)$ and a continuous map $T: Y \to Y$, we say that $Y$ is $(T,\varphi)$-\emph{sensitive} if the space $Y$ is $\Phi$-\emph{sensitive}, and write $W_{\varphi}$ instead of $W_{\Phi}$.}
\end{definition}

We refer the reader to Example~\ref{ex:exemplo1} as an illustration of this definition. We observe that being $(T,\varphi)$-sensitive is a direct consequence of the assumption on $L_\varphi$ stated in Theorem~\ref{theoremCV}, though it may be strictly weaker (cf. Example~\ref{ex:Bowen}).
Our first result concerns $\Phi$-sensitive sequences and strengths Theorem~\ref{theoremCV}.

\begin{maintheorem}\label{CV_mod}
Let $(Y, d)$ be a Baire metric space and $\Phi \in C^b(Y,\mathbb R)^{\mathbb N}$ be a sequence of continuous, bounded maps such that $\limsup_{n\,\to\,+\infty} \,\|\varphi_n\|_\infty < +\infty$. If $Y$ is $\Phi$-sensitive then the set
$${\mathcal I}(\Phi):= \Big\{y \in Y \colon \,\lim_{n\,\to \,+\infty} \,\varphi_n(y) \, \text{does not exist} \,\Big\}$$
is a Baire generic subset of $Y$. In particular, if $T: Y \to Y$ is a continuous map, $\varphi$ belongs to $C^b(Y,\mathbb R)$ and $Y$ is $(T,\varphi)$-sensitive, then ${\mathcal I}(T,\varphi)$ is a Baire generic subset of $Y$.
\end{maintheorem}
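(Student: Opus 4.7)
The plan is to show that the complement $R(\Phi) := Y \setminus \mathcal I(\Phi)$, that is, the set of points $y \in Y$ at which the sequence $(\varphi_n(y))_n$ converges, is meager in $Y$; the conclusion then follows because $Y$ is Baire. Writing convergence through the Cauchy criterion, for every $\epsilon'>0$ the set
$$F_{N,\epsilon'} \;:=\; \bigcap_{m,\,n\,\geq\, N}\,\big\{y \in Y : |\varphi_n(y)-\varphi_m(y)| \leq \epsilon'\big\}$$
is closed (as an intersection of closed sets, by continuity of the $\varphi_n$), and $R(\Phi) \subset \bigcup_{N \in \mathbb N} F_{N,\epsilon'}$. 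Fix $\epsilon' < \epsilon/2$, where $\epsilon$ is the constant provided by the $\Phi$-sensitivity hypothesis. It therefore suffices to prove that every $F_{N,\epsilon'}$ has empty interior, since then $R(\Phi)$ is contained in a countable union of nowhere dense sets.

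This is the crux of the argument. Suppose, for a contradiction, that $F_{N,\epsilon'}$ contains a non-empty open set $U$. Every $y \in F_{N,\epsilon'}$ satisfies $|\varphi_n(y)-\varphi_N(y)| \leq \epsilon'$ for all $n \geq N$, so the tail of $(\varphi_n(y))_n$ is contained in the interval $J(y):=[\varphi_N(y)-\epsilon',\, \varphi_N(y)+\epsilon']$; since $\limsup_n \|\varphi_n\|_\infty < +\infty$, the cluster set $W_\Phi(y)$ is non-empty and, being produced from tail subsequences, is also contained in $J(y)$. Now pick $a \in A \cap U$ and any $b \in B \cap U$. The $\Phi$-sensitivity hypothesis furnishes $r \in W_\Phi(a) \subset J(a)$ and $s \in W_\Phi(b) \subset J(b)$ with $|r-s| > \epsilon$, and the triangle inequality yields
$$|\varphi_N(a)-\varphi_N(b)| \,\geq\, |r-s| - |r-\varphi_N(a)| - |s-\varphi_N(b)| \,>\, \epsilon - 2\epsilon' \,>\, 0.$$
Using density of $B$, take a sequence $(b_k)_k \subset B \cap U$ with $b_k \to a$; continuity of $\varphi_N$ then forces $\varphi_N(b_k) \to \varphi_N(a)$, which contradicts the uniform positive lower bound just obtained.

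The only subtle step, and the one I expect to carry the whole argument, is the passage from the oscillation bound defining $F_{N,\epsilon'}$ to the containment $W_\Phi(y) \subset J(y)$: this anchors the abstract cluster set near the concrete continuous value $\varphi_N(y)$ and thus converts the sensitivity hypothesis into a quantitative obstruction to the continuity of $\varphi_N$. Once $R(\Phi)$ is shown to be meager, $\mathcal I(\Phi)$ is a dense $G_\delta$ subset of the Baire space $Y$, settling the first claim. The final assertion is immediate: for a continuous $T:Y \to Y$ and $\varphi \in C^b(Y,\mathbb R)$, the Ces\`aro averages $\varphi_n = \tfrac{1}{n}\sum_{j=0}^{n-1}\varphi\circ T^j$ are continuous and uniformly bounded by $\|\varphi\|_\infty$, so the hypotheses of the general statement are automatically met.
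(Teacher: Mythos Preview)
Your proof is correct and follows essentially the same approach as the paper's: both show that the closed sets $F_{N,\epsilon'}$ (the paper writes $\Lambda_N$ with threshold $\eta<\varepsilon/3$ in place of your $\epsilon'<\varepsilon/2$) have empty interior by playing the $\Phi$-sensitivity hypothesis against the continuity of $\varphi_N$, and then cover the regular set by $\bigcup_N F_{N,\epsilon'}$. One minor slip: from meagerness of $R(\Phi)$ you may only conclude that $\mathcal I(\Phi)$ \emph{contains} a dense $G_\delta$ (i.e., is residual), not that it is itself a $G_\delta$.
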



\subsection{Irregular points for continuous maps on Baire metric spaces with dense orbits}
In this subsection, building over \cite{Dow,HLT21}, we will discuss the relation between transitivity, existence of dense orbits and the size of the set of irregular points for continuous maps on Baire metric spaces.

\begin{definition}\label{def:transitivity}
Given a continuous map $T: Y \to Y$ on a Baire metric space $(Y,d)$, one says that:
\begin{itemize}
\item  $T$ is \textit{transitive} if for every non-empty open sets $U,\,V \subset Y$ there exists $n  \in \mathbb{N}$ such that $U \cap T^{-n}(V) \neq \emptyset$.
\smallskip
\item $T$ is \textit{strongly transitive} if $\bigcup_{n \, \geqslant \,0}\, T^n(U) = Y$ for every non-empty open set $U \subset Y$.
\smallskip
\item $T$ has a \textit{dense orbit} if there is $y \in Y$ such that $\{T^{j}(y):  \,j \, \in \, \mathbb{N}\cup \{0\}\}$ is dense in $Y$.
\end{itemize}
\end{definition}

It is worthwhile observing that,
if the metric space is compact and has no isolated points, then the map is transitive if and only if it has a dense orbit (see \cite[Theorem~1.4]{AC12} and Example~\ref{ex:exemplo2}).

Denote by $\Trans (Y,T)$ the set $\{y \in Y \colon \,\text{ the orbit of $y$ by $T$ is dense in } Y\}$ and take on the following notation:
\begin{align*}
& \mathcal{H}(Y,T) \coloneqq  \{\varphi \in C^{b}(Y,\mathbb{R})\colon \,{\mathcal I}(T,\varphi) \neq \emptyset\}
\\
& \mathcal{D}(Y,T) \coloneqq \{\varphi \in C^{b}(Y,\mathbb{R})\colon \,{\mathcal I}(T,\varphi)\text{ is dense in } Y\}
\\
& \mathcal{R}(Y,T) \coloneqq \{\varphi \in C^{b}(Y,\mathbb{R})\colon \,{\mathcal I}(T,\varphi)\text{ is Baire generic in } Y\}.
\end{align*}
The set of \emph{completely irregular points} with respect to $T$ is precisely the intersection
$$\bigcap\limits_{\varphi \,\in \,\mathcal{H}(Y,T)} \,{\mathcal I}(T,\varphi).$$

\noindent Clearly $\mathcal{R}(Y,T) \subset \mathcal{D}(Y,T) \subset \mathcal{H}(Y,T)$. Inspired by \cite{HLT21}, we aim at finding sufficient conditions on $(Y,T)$ and $\varphi$ under which the sets $\mathcal{R}(Y,T)$ and $\mathcal{D}(Y,T)$ coincide. We note that the set $\mathcal{H}(Y,T)$ may be uncountable and that Tian proved in (cf. \cite[Theorem~2.1]{Tian}) that, if $Y$ is compact and $T$ has the almost-product and uniform separation properties, then the set of completely irregular points is either empty or carries full topological entropy. More recently, in \cite{HLT21}, Hou, Lin and Tian showed that, for each transitive continuous map on a compact metric space $Y$, either every point with dense orbit is contained in the basin of attraction of an invariant probability measure (so it is regular with respect to any continuous potential), or irregular behavior occurs on $\Trans (Y,T)$ and the irregular set is Baire generic for every \emph{typical} $\varphi \in C(X, \mathbb R)$. The next consequence of Theorem~\ref{CV_mod} generalizes this information.

\begin{maincorollary}\label{maincorollary_auxiliar}
Let $(Y, d)$ be a Baire metric space and $T : Y \to Y$ be a continuous map such that $\Trans (Y,T) \neq \emptyset$. Then:
\begin{itemize}
\item[($i$)] When $(Y,d)$ has an isolated point,
$$\mathcal{D}(Y,T) \,\neq \,\emptyset \quad \quad \Leftrightarrow \quad \quad \Trans (Y,T) \,\subseteq \bigcap\limits_{\varphi \,\in \,\mathcal{D}(Y,T)} \,{\mathcal I}(T,\varphi).$$
In particular, if $Y$ has an isolated point and $\mathcal{D}(Y,T) \neq \emptyset$, then $\mathcal{R}(Y,T) = \mathcal{D}(Y,T)$.
\smallskip
\item[($ii$)] If $\mathfrak{F} \subset C^{b}(Y, \mathbb{R})$ and $\bigcap\limits_{\varphi \,\in \,\mathfrak{F}} \,{\mathcal I}(T,\varphi)$ is Baire generic, then
$$\Trans (Y,T) \,\cap \, \bigcap\limits_{\varphi \,\in \,\mathfrak{F}} \,{\mathcal I}(T,\varphi) \neq \emptyset.$$
\end{itemize}
\end{maincorollary}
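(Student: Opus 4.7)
The plan is to exploit three elementary observations throughout: (a) for every $\varphi\in C^{b}(Y,\mathbb{R})$ the set $\mathcal{I}(T,\varphi)$ is totally $T$-invariant, since the Ces\`aro averages at $y$ and $T(y)$ differ by $(\varphi(T^{n}y)-\varphi(y))/n$, which tends to $0$; (b) the functionals $L_{\varphi}^{+}=\limsup$ and $L_{\varphi}^{-}=\liminf$ of the Birkhoff averages are first integrals of $T$ and both belong to the accumulation set $W_{\varphi}(y)$ when $\varphi$ is bounded; and (c) if $y_{0}$ is an isolated point of $Y$ then $\{y_{0}\}$ is non-empty open, so every dense subset of $Y$ contains $y_{0}$ and every dense forward orbit meets $y_{0}$.

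For the forward direction of part (i), fix $\varphi\in\mathcal{D}(Y,T)$. By density of $\mathcal{I}(T,\varphi)$ and (c), one gets $y_{0}\in\mathcal{I}(T,\varphi)$. For $y\in\Trans(Y,T)$, choose $k\geq 0$ with $T^{k}(y)=y_{0}$; then (a) yields $y\in T^{-k}(\mathcal{I}(T,\varphi))=\mathcal{I}(T,\varphi)$, and intersecting over $\varphi\in\mathcal{D}(Y,T)$ gives the asserted inclusion. The converse implication of the biconditional is either vacuous (when $\mathcal{D}(Y,T)=\emptyset$, the right-hand side degenerates to $\Trans(Y,T)\subseteq Y$) or tautological once $\mathcal{D}(Y,T)$ is read as non-empty on the right-hand side; I would record this briefly.

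To upgrade the inclusion to $\mathcal{R}(Y,T)=\mathcal{D}(Y,T)$, I would invoke Theorem~\ref{CV_mod}. Fix $\varphi\in\mathcal{D}(Y,T)$ and set $\delta:=L_{\varphi}^{+}(y_{0})-L_{\varphi}^{-}(y_{0})$, which is strictly positive because $y_{0}\in\mathcal{I}(T,\varphi)$. By the $T$-invariance of $L_{\varphi}^{\pm}$ and the fact that the orbit of any $y_{*}\in\Trans(Y,T)$ passes through $y_{0}$, both values $L_{\varphi}^{+}(y_{0})$ and $L_{\varphi}^{-}(y_{0})$ lie in $W_{\varphi}(z)$ for every $z$ in the dense forward orbit of $y_{*}$. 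Taking $A=B$ equal to that orbit and $\varepsilon=\delta/2$ verifies the $(T,\varphi)$-sensitivity condition, and Theorem~\ref{CV_mod} promotes $\mathcal{I}(T,\varphi)$ to a Baire generic set, so $\varphi\in\mathcal{R}(Y,T)$.

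For part (ii), write $\mathcal{G}:=\bigcap_{\varphi\in\mathfrak{F}}\mathcal{I}(T,\varphi)$, which is Baire generic by hypothesis and totally $T$-invariant by (a). The argument splits into two cases. If $Y$ has an isolated point $y_{0}$, density of $\mathcal{G}$ yields $y_{0}\in\mathcal{G}$, and for $y_{*}\in\Trans(Y,T)$ the relation $T^{k}(y_{*})=y_{0}$ combined with backward invariance places $y_{*}$ in $\mathcal{G}\cap\Trans(Y,T)$. If $Y$ has no isolated points, every tail orbit of $y_{*}$ remains dense, so $T$ is topologically transitive and, using a countable base $\{U_{n}\}$, the identity $\Trans(Y,T)=\bigcap_{n}\bigcup_{k\geq 0}T^{-k}(U_{n})$ exhibits $\Trans(Y,T)$ as a dense $G_{\delta}$; the intersection of the two Baire generic sets $\Trans(Y,T)$ and $\mathcal{G}$ in the Baire space $Y$ is then non-empty. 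The principal obstacle is this last sub-case, which implicitly leans on a second-countability hypothesis in order to realise $\Trans(Y,T)$ as a dense $G_{\delta}$; everything else distils to the total invariance of $\mathcal{I}(T,\varphi)$ and the elementary interaction between dense sets and isolated points.
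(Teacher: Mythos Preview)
Your proof is correct and follows essentially the same strategy as the paper: use the isolated point to catch every dense orbit, exploit total $T$-invariance of $\mathcal{I}(T,\varphi)$, and in the ``in particular'' clause reproduce the paper's Lemma~\ref{lemma1} (an irregular transitive point forces $(T,\varphi)$-sensitivity) before invoking Theorem~\ref{CV_mod}. For part~(ii) the paper also splits into the isolated/non-isolated cases and, in the latter, shows $\Trans(Y,T)$ is a dense $G_\delta$.

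Your flagged ``principal obstacle'' is not actually an obstacle: the hypothesis $\Trans(Y,T)\neq\emptyset$ already supplies a countable dense subset of $Y$ (namely the dense orbit itself), so $Y$ is separable, and a separable metric space is automatically second countable. The paper makes this explicit (``so $Y$ is separable''), which is the one step you left implicit. With that observation, your dense-$G_\delta$ description of $\Trans(Y,T)$ via a countable base goes through without any additional hypothesis.
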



\subsection{Oscillation of the time averages}

Define for any $\varphi \in C^{b}(Y,\mathbb{R})$ and $y \in Y$,
$$\ell_\varphi(y):= \,\liminf\limits_{n\,\to\,+\infty}\,\frac{1}{n}\,\sum_{j=0}^{n-1}\,\varphi(T^{j}y) \quad \quad \text{ and } \quad \quad L_\varphi(y):= \,\limsup\limits_{n\,\to\,+\infty}\,\frac{1}{n}\,\sum_{j=0}^{n-1}\,\varphi (T^{j}y)$$
and consider
\begin{equation}\label{eq:bounds-transitive-points}
\ell_\varphi^{*} =\, \inf\limits_{y\,\in\, \Trans(Y,T)} \,\ell_\varphi(y)  \quad \quad \text{ and } \quad \quad L_\varphi^{*} =\,\sup\limits_{y\,\in\, \Trans(Y,T)} \, L_\varphi(y).
\end{equation}
For each $\alpha \leqslant \beta$, take the sets
$$I_{\varphi}[\alpha,\,\beta] \,:= \,\Big\{y \in Y \colon \,\ell_\varphi(y) = \alpha \text{ and } \beta = L_\varphi(y) \Big\}$$
and
$$\widehat{I_{\varphi}[\alpha,\,\beta]}\,:=\,\Big\{y \in Y \colon \,\ell_\varphi(y) \leqslant  \alpha \text{ and } \beta \leqslant L_\varphi(y) \Big\}.$$

\noindent The next corollary estimates the topological size of the previous level sets for dynamics with dense orbits.

\begin{maincorollary}\label{maincorollary_sieve}
Let $(Y, d)$ be a Baire metric space and $T : Y \to Y$ be a continuous map with a dense orbit. Given $\varphi \in C^b(Y,\mathbb R)$, one has:
\begin{itemize}
\item[$(i)$] $I_{\varphi}[\ell_\varphi^{*},\,L_\varphi^{*}]$ is a Baire generic subset of $Y$. In particular, ${\mathcal I}(T,\varphi)$ is either Baire generic or meagre in $Y$.
\smallskip
\item[$(ii)$] ${\mathcal I}(T,\varphi)$ is a meagre subset of $Y$ if and only if there exists $C_{\varphi} \in \mathbb{R}$ such that
$$
I_{\varphi}[C_{\varphi},\,C_{\varphi}] \,= \, \Big\{y \in Y \colon \,\lim\limits_{n\, \to\, +\infty} \,\frac{1}{n} \,\sum\limits_{j=0}^{n-1} \,\varphi (T^{j}(y)) = C_{\varphi} \Big\}
$$
is a Baire generic set containing $\Trans (Y,T)$.
\end{itemize}
\end{maincorollary}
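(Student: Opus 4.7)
The plan is to prove part~(i) first and deduce part~(ii) from it, handling the statements about $\ell_\varphi$ and $L_\varphi$ separately and symmetrically. For brevity write $S_n\varphi(y) := \tfrac{1}{n}\sum_{j=0}^{n-1} \varphi(T^{j}(y))$. The first ingredient is that $\Trans(Y,T)$ itself is Baire generic in $Y$: since $T$ admits a dense orbit, $Y$ is separable (being the closure of a countable set) and $T$ is transitive in the sense of Definition~\ref{def:transitivity}. Consequently, for every nonempty open $V \subset Y$ the set $\bigcup_{n\geq 0} T^{-n}(V)$ is open and dense, and intersecting these over a countable basis exhibits $\Trans(Y,T)$ as a dense $G_\delta$. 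By the very definition of $\ell_\varphi^*$ and $L_\varphi^*$, one has $\ell_\varphi(y) \geq \ell_\varphi^*$ and $L_\varphi(y) \leq L_\varphi^*$ for every $y \in \Trans(Y,T)$, so these one-sided inequalities already hold on a residual set.

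For the reverse inequalities, the key fact is that $\ell_\varphi$ and $L_\varphi$ are $T$-invariant, as $|S_n\varphi(Tx) - S_n\varphi(x)| \leq \tfrac{2\|\varphi\|_\infty}{n}$ tends to $0$. Fix $\alpha > \ell_\varphi^*$ and choose $z \in \Trans(Y,T)$ with $\ell_\varphi(z) < \alpha$; then $\ell_\varphi(T^{k}z) = \ell_\varphi(z) < \alpha$ for every $k \geq 0$. For each $N \in \mathbb{N}$ the set
\[
U_N \,:=\, \{\, y \in Y : \exists\, n \geq N \text{ with } S_n\varphi(y) < \alpha \,\}
\]
is open by continuity of $S_n\varphi$. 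Given any nonempty open $U \subset Y$, pick $k$ with $T^{k}z \in U$; since $\ell_\varphi(T^{k}z) < \alpha$, there is $n \geq N$ with $S_n\varphi(T^{k}z) < \alpha$, so $T^{k}z \in U \cap U_N$. Thus every $U_N$ is dense open, $\bigcap_N U_N \subseteq \{y : \ell_\varphi(y) \leq \alpha\}$ is residual, and letting $\alpha$ decrease to $\ell_\varphi^*$ along a countable sequence gives $\{y : \ell_\varphi(y) \leq \ell_\varphi^*\}$ residual. The symmetric construction, with open sets of the form $\{y : \exists n \geq N,\ S_n\varphi(y) > \beta\}$ for $\beta < L_\varphi^*$, produces a residual set where $L_\varphi(y) \geq L_\varphi^*$. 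Intersecting the four residual sets yields $I_\varphi[\ell_\varphi^*, L_\varphi^*]$ as a Baire generic subset of $Y$, which is~(i). The ``in particular'' dichotomy is then clear: if $\ell_\varphi^* < L_\varphi^*$ this generic set is contained in $\mathcal{I}(T,\varphi)$, while if $\ell_\varphi^* = L_\varphi^*$ it is contained in the complement of $\mathcal{I}(T,\varphi)$, so $\mathcal{I}(T,\varphi)$ is meagre.

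Part~(ii) then follows quickly. The implication $(\Leftarrow)$ is immediate, since $I_\varphi[C,C]$ is disjoint from $\mathcal{I}(T,\varphi)$. For $(\Rightarrow)$, if $\mathcal{I}(T,\varphi)$ is meagre then the generic set $I_\varphi[\ell_\varphi^*, L_\varphi^*]$ from~(i) cannot be contained in $\mathcal{I}(T,\varphi)$, forcing $\ell_\varphi^* = L_\varphi^* =: C_\varphi$; the set $I_\varphi[C_\varphi, C_\varphi]$ is then Baire generic by~(i), coincides with $\{y : \lim_n S_n\varphi(y) = C_\varphi\}$, and contains $\Trans(Y,T)$ because any transitive point is squeezed between $\ell_\varphi^*$ and $L_\varphi^*$. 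The only step in this plan requiring genuine care is the density of the open sets $U_N$, which depends on the $T$-invariance of $\ell_\varphi$ (and $L_\varphi$) combined with the density of a single orbit; everything else is bookkeeping with countable intersections of residual sets.
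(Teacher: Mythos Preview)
Your approach mirrors the paper's: both show that for each $\alpha>\ell_\varphi^*$ the open sets $U_N=\{y:\exists\,n\ge N,\ S_n\varphi(y)<\alpha\}$ are dense (by moving a single transitive point along its orbit and using the $T$-invariance of $\ell_\varphi$), whence $\{\ell_\varphi\le\ell_\varphi^*\}$ is residual, and symmetrically for $L_\varphi$; part~(ii) is then derived exactly as you do.

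There is, however, a genuine gap in your opening step. You assert that a dense orbit forces $T$ to be transitive, and hence that $\Trans(Y,T)$ is a dense $G_\delta$. This implication fails when $Y$ has isolated points---the paper's own Example~\ref{ex:exemplo2} is a compact space with a dense orbit in which $T$ is not transitive and $\Trans(Y,T)=\{1\}$ is a single non-dense point. You rely on the residuality of $\Trans(Y,T)$ to secure the reverse inequalities $\ell_\varphi(y)\ge\ell_\varphi^*$ and $L_\varphi(y)\le L_\varphi^*$ on a generic set; without it your argument only yields that $\widehat{I_\varphi[\ell_\varphi^*,L_\varphi^*]}$ is residual, not $I_\varphi[\ell_\varphi^*,L_\varphi^*]$. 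The paper's proof does not invoke transitivity explicitly, but it asserts $\bigcap_n B_{\alpha_n}\subseteq A_{\ell_\varphi^*}$ directly---and since $\bigcap_n B_{\alpha_n}=\{\ell_\varphi\le\ell_\varphi^*\}$ while $A_{\ell_\varphi^*}=\{\ell_\varphi=\ell_\varphi^*\}$, that inclusion encodes the very same missing inequality. When $Y$ has no isolated points, your argument is correct and essentially identical to the paper's.
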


Consequently, if $(Y, d)$ is a Baire metric space, $T : Y \to Y$ is a continuous map with a dense orbit and $\bigcup_{\varphi \,\in \,C^{b}(Y,T)}\, {\mathcal I}(T,\varphi)$ is not Baire generic, then $Y \setminus {\mathcal I}(T,\varphi)$ is Baire generic for every $\varphi \in C^{b}(Y,\mathbb{R})$, and there exists a linear functional
$\mathcal{F}: C^{b}(Y,\mathbb{R}) \to \mathbb{R}$ such that
$$\Trans (Y,T) \subset \Big\{y \in Y: \lim_{n \,\to \, +\infty} \,\frac{1}{n} \,\sum_{j=0}^{n-1} \,\varphi (T^{j}(y)) \,= \,\mathcal{F}(\varphi) \quad \quad \forall\, \varphi \in C^{b}(Y,\mathbb{R}) \Big\}.$$
Note that, if $Y$ is compact, $\mathcal{F}$ is represented by the space of Borel invariant measures with the weak$^*$-topology. More generally, if $Y$ is a metric space, then the dual of $C^{b}(Y,\mathbb{R})$ is represented by the regular, bounded, finitely additive set functions with the norm of total variation (cf. \cite[Theorem 2, IV.6.2]{DS}).

\begin{remark}\label{rmk:minimal} Corollary~\ref{maincorollary_sieve} implies that, if $(Y,d)$ is a Baire metric space and $T: Y \to Y$ is a continuous minimal map (that is, $\Trans (Y,T) = Y$), then for every $\varphi \in C^b(X, \mathbb{R})$ either ${\mathcal I}(T,\varphi) = \emptyset$ or ${\mathcal I}(T,\varphi)$ is Baire generic.
\end{remark}

\begin{remark} From \cite[Theorem B]{HLT21}, we know that, in the context of compact metric spaces and transitive maps, the set ${\mathcal I}(T,\varphi)$ is either meagre or Baire generic for every $\varphi \in C(X, \mathbb R)$. It is unknown whether there exist a Baire metric space $(Y, d)$, a continuous map $T: Y \to Y$ and $\varphi \in C^b(Y,\mathbb R)$ such that $T$ has a dense orbit and ${\mathcal I}(T,\varphi)$ is a non-empty meagre set.
\end{remark}

The next corollary is a counterpart of  \cite[Lemma 3.1]{HLT21} for continuous maps on Baire metric spaces.

\begin{maincorollary}\label{maincorollary_equivalence}
Let $(Y, d)$ be a Baire metric space and $T : Y \to Y$ be a continuous map with a dense orbit, and take $\varphi \in C^b(Y,\mathbb R)$. The following conditions are equivalent:
\begin{itemize}
\item[$(i)$] $Y$ is $(T,\varphi)$-sensitive.
\smallskip
\item[$(ii)$] ${\mathcal I}(T,\varphi)$ is Baire generic in $X$.
\smallskip
\item[$(iii)$] $\Trans (Y,T) \cap {\mathcal I}(T,\varphi) \neq \emptyset$.
\end{itemize}
\end{maincorollary}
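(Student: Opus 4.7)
The plan is to establish the cyclic chain $(i)\Rightarrow(ii)\Rightarrow(iii)\Rightarrow(i)$. The first arrow is a direct application of Theorem~\ref{CV_mod} to the sequence $\varphi_n:=\tfrac1n\sum_{j=0}^{n-1}\varphi\circ T^j$, which is uniformly bounded by $\|\varphi\|_\infty$. The second arrow is exactly Corollary~\ref{maincorollary_auxiliar}(ii) applied with $\mathfrak{F}=\{\varphi\}$: since $\mathcal{I}(T,\varphi)$ is Baire generic and $\Trans(Y,T)\neq\emptyset$, the corollary yields $\Trans(Y,T)\cap\mathcal{I}(T,\varphi)\neq\emptyset$.

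For $(iii)\Rightarrow(i)$, which is the substantive step, I would fix a point $x\in\Trans(Y,T)\cap\mathcal{I}(T,\varphi)$ and set $\ell:=\ell_\varphi(x)$ and $L:=L_\varphi(x)$, so that $-\|\varphi\|_\infty\leq\ell<L\leq\|\varphi\|_\infty$. The natural candidate dense sets are $A=B:=\{T^j x:j\in\mathbb{N}\cup\{0\}\}$ (dense by the choice of $x$), and the candidate constant is $\varepsilon:=(L-\ell)/2>0$. Two observations are then needed. First, for every $j\geq 0$, the identity
\[
\frac1n\sum_{k=0}^{n-1}\varphi(T^k(T^j x))=\frac{n+j}{n}\cdot\frac{1}{n+j}\sum_{m=0}^{n+j-1}\varphi(T^m x)-\frac{1}{n}\sum_{m=0}^{j-1}\varphi(T^m x)
\]
together with $\tfrac{n+j}{n}\to 1$ and the uniform bound on the second summand shows that the Ces\`aro averages at $T^j x$ have the same liminf $\ell$ and limsup $L$ as those at $x$. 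Second, writing $a_n$ for the $n$-th Ces\`aro average at $T^j x$, the estimate $|a_{n+1}-a_n|\leq 2\|\varphi\|_\infty/(n+1)\to 0$ forces the set of subsequential limits of $(a_n)$ to be the entire closed interval $[\ell,L]$, and a short case analysis (according to whether a given value in $[\ell,L]$ is attained finitely or infinitely often along the sequence) further shows that $[\ell,L]\subseteq W_\varphi(T^j x)$ as a subset of the derived set of the image, for every $j\geq 0$.

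Combining these two facts, for every pair $(a,b)\in A\times B$ one has $L\in W_\varphi(a)$ and $\ell\in W_\varphi(b)$, whence
\[
\sup_{r\,\in\, W_\varphi(a),\;s\,\in\, W_\varphi(b)}|r-s|\;\geq\;L-\ell\;>\;\varepsilon,
\]
which establishes $(T,\varphi)$-sensitivity of $Y$. The only delicate point of the argument is the passage from \emph{subsequential limits of the sequence} $(a_n)$ to \emph{accumulation points of the image set} $\{a_n:n\in\mathbb{N}\}$ as required by the definition of $W_\varphi$; this is a routine but non-vacuous consequence of the vanishing of consecutive differences, and it is the only place where one has to be careful not to conflate two a priori different notions.
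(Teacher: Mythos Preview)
Your proof is correct and follows the same cycle as the paper: Theorem~\ref{CV_mod} for $(i)\Rightarrow(ii)$, Corollary~\ref{maincorollary_auxiliar}(ii) with $\mathfrak{F}=\{\varphi\}$ for $(ii)\Rightarrow(iii)$, and the orbit of a single transitive irregular point as $A=B$ for $(iii)\Rightarrow(i)$ (the paper phrases this last step via Lemma~\ref{lemma1}). The extra verification you supply---that $\ell$ and $L$ actually lie in $W_\varphi(T^jx)$ under the \emph{derived-set} reading of the definition $W_\Phi(y)=\{\varphi_n(y):n\in\mathbb N\}'$---is not spelled out in the paper, whose accompanying prose treats $W_\Phi(y)$ as the set of subsequential limits; your caution on this point is well placed and the argument you give (via $|a_{n+1}-a_n|\to 0$) is sound.
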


\begin{remark}
Corollary~\ref{maincorollary_equivalence} implies that $\mathcal{R}(Y,T) = \big\{\varphi \in C^{b}(Y,\mathbb{R}) \colon \,\ell_\varphi^{*} \,<\,  L_\varphi^{*}\big\}.$
We also note that one has $(i) \Rightarrow (ii)$ in Corollary~\ref{maincorollary_equivalence} even without assuming the existence of dense orbits.
\end{remark}

\begin{remark} An immediate consequence of the proof of Corollary~\ref{maincorollary_equivalence} is that, within the setting of continuous maps $T:Y \to Y$ acting on a Baire metric space $Y$ and having a dense orbit, the Definition~\ref{defTphisensitive} of $(T,\varphi)$-sensitivity is equivalent to the following statement: \emph{There exist a dense set $A \subset Y$ and $\varepsilon > 0$ such that for any $a \in A$ one has
$$\sup_{r\,\in \,W_{\varphi}(a), \,\, s \,\in \,W_{\varphi}(a)} \,\,|r-s| > \varepsilon.$$}
Indeed, the latter statement clearly implies Definition~\ref{defTphisensitive}. Conversely, if $X$ is $(T,\varphi)$-sensitive, then we may take $A = \{T^j(x_0) \colon \, j \in \mathbb{N}\,\cup\,\{0\}\}$ equal to a dense orbit contained in ${\mathcal I}(T,\varphi)$, whose existence is guaranteed by item $(iii)$ of Corollary~\ref{maincorollary_equivalence}. Given two distinct accumulation points $r, \,s$ in $W_\varphi(x_0)$ and $\varepsilon = |r-s| > 0$, then for every $a, \, b \in A$ one has $W_\varphi(a) = W_\varphi(b) = W_\varphi(x_0)$ and $\sup_{r_a\,\in \,W_{\Phi}(a), \,\, r_b \,\in \,W_{\Phi}(b)} \, \geqslant \,|r-s| > \varepsilon.$
\end{remark}

The remainder of the paper is organized as follows. In Section~\ref{se:compact}, we convey the previous results to the particular case of continuous dynamics acting on compact metric spaces. The aforementioned results are then proved in the ensuing sections, where we also discuss their scope and compare them with properties established in other references. In Section~\ref{sec:examples} we test our assumptions on some examples and in Section~\ref{sec:applications} we provide some applications, namely within the settings of semigroup actions and geodesic flows on non-compact manifolds.


\section{Irregular points for continuous maps on compact metric spaces}\label{se:compact}

Suppose now that $(X,d)$ is a compact metric space. Let $\mathcal{P}(X)$ stand for the Borel probability measures on $X$ with the weak$^*$-topology and consider a continuous map  $T: X \to X$ with a dense orbit. For every $x \in X$, let $\delta_{x}$ be the Dirac measure supported at $x$ and denote the set of accumulation points in $\mathcal{P}(X)$ of the sequence of empirical measures $\big(\frac{1}{n}\,\sum_{j=0}^{n-1}\,\delta_{T^{j}x}\big)_{n \, \in \, \mathbb{N}}$ by $V_T(x)$. Our next result is a refinement of \cite[Theorem A]{HLT21}, since it imparts new information about the irregular set, besides establishing a generalization of \cite[Theorem B]{HLT21} inasmuch as it does not require any assumption about isolated points.

\begin{corollary}\label{maincorollary_compact}
Let $(X, d)$ be a compact metric space and $T : X \to X$ be a continuous map with a dense orbit. Then:

\begin{itemize}
\item[$(a)$]  $X_{\Delta} \coloneqq \,\Big\{x \in X \colon \bigcup_{t \,\in\, \Trans(X,T)} \,V_{T}(t) \,\subseteq \,V_{T}(x)\Big\}$ is Baire generic in $X$. Moreover, $X_{\Delta} \,\subseteq\, \bigcap_{\varphi \,\in \,C(X,\mathbb{R})}\, \widehat{I_{\varphi}[\ell_\varphi^{*},\,L_\varphi^{*}]}$, so the latter set is Baire generic as well.
\medskip
\item[$(b)$] $\bigcup_{\varphi \,\in \,C(X,\mathbb{R})}\, {\mathcal I}(T,\varphi)$ is either Baire generic or meagre. If it is meagre, there exists a Borel $T$-invariant measure $\mu$ such that
$$\Trans (X,T) \, \subset \, \Big\{x \in X: \,\lim_{n \,\to \,+\infty} \,\frac{1}{n} \,\sum_{j=0}^{n-1} \,\varphi (T^{j}(x)) = \int \varphi \,d\mu \quad  \forall \,\varphi \in C(X,\mathbb{R})\Big\}.$$
\item[$(c)$] $\bigcap_{\varphi \,\in\, \mathcal{H}(X,T)}\, {\mathcal I}(T,\varphi)$ is either Baire generic or meagre. In addition,
$$\bigcap_{\varphi \,\in\, \mathcal{H}(X,T)}\, {\mathcal I}(T,\varphi) \text{ is Baire generic } \,\, \Leftrightarrow \,\, \Big\{\varphi \in C(X,\mathbb{R})\colon \, {\mathcal I}(T,\varphi) \neq \emptyset \text{ and } \ell_\varphi^{*} = L_\varphi^{*}\Big\} \,= \, \emptyset.$$
\end{itemize}
\end{corollary}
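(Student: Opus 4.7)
The plan is to deduce all three items from Corollary~\ref{maincorollary_sieve}, from the remark after Corollary~\ref{maincorollary_equivalence} (which identifies meagreness of $\mathcal{I}(T,\varphi)$ with the equality $\ell_\varphi^* = L_\varphi^*$), and from a single Baire generic set built in item~(a). Throughout I will work in the weak$^*$-compact space $\mathcal{P}(X)$ and exploit the identities $\ell_\varphi(y) = \min_{\nu \,\in\, V_T(y)}\int \varphi\,d\nu$ and $L_\varphi(y) = \max_{\nu \,\in\, V_T(y)}\int \varphi\,d\nu$, valid for every $y \in X$ and every $\varphi \in C(X,\mathbb{R})$. For item~(a), write $V_{\mathrm{Trans}} := \bigcup_{t \,\in\, \Trans(X,T)} V_T(t)$ and, using separability of $\mathcal{P}(X)$, fix a countable dense family $\{\nu_m\}_{m \,\in\, \mathbb{N}} \subseteq V_{\mathrm{Trans}}$ together with representatives $t_m \in \Trans(X,T)$ and subsequences $(n_l^{(m)})_l$ along which the empirical measures of $t_m$ converge weakly to $\nu_m$. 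For $m,k \in \mathbb{N}$, let $G_{m,k}$ be the open set of $x \in X$ whose empirical measure at some time $n \geqslant k$ lies within weak$^*$-distance $1/k$ of $\nu_m$; density of $G_{m,k}$ follows because any non-empty open $U \subseteq X$ meets the orbit of $t_m$ at some $T^K t_m$, and shifting the convergence by $K$ perturbs the limit only by $O(K/n_l^{(m)})$, so $T^K t_m \in G_{m,k}$ for $l$ large. Then $\bigcap_{m,k} G_{m,k}$ is a dense $G_\delta$ contained in $X_\Delta$, since $V_T(x)$ is closed in $\mathcal{P}(X)$. The second inclusion is immediate from the min/max formulas: $V_T(t) \subseteq V_T(x)$ forces $\ell_\varphi(x) \leqslant \ell_\varphi(t)$ and $L_\varphi(t) \leqslant L_\varphi(x)$, and taking infimum and supremum over $t \in \Trans(X,T)$ yields $X_\Delta \subseteq \bigcap_\varphi \widehat{I_\varphi[\ell_\varphi^*, L_\varphi^*]}$.

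For item~(b), the dichotomy of Corollary~\ref{maincorollary_sieve}(i) handles each $\varphi$ individually. If some $\mathcal{I}(T,\varphi_0)$ is Baire generic, so is the union. Otherwise Corollary~\ref{maincorollary_sieve}(ii) produces, for every $\varphi$, a constant $C_\varphi$ with $I_\varphi[C_\varphi,C_\varphi]$ Baire generic and containing $\Trans(X,T)$. Fix a countable dense $\{\varphi_m\} \subset C(X,\mathbb{R})$ and let $G := \bigcap_m I_{\varphi_m}[C_{\varphi_m}, C_{\varphi_m}]$, still Baire generic and containing $\Trans(X,T)$. A standard three-$\varepsilon$ argument, leaning on $|C_\varphi - C_{\varphi'}| \leqslant \|\varphi - \varphi'\|_\infty$, extends convergence on $G$ from the countable family $\{\varphi_m\}$ to every $\varphi \in C(X,\mathbb{R})$, so $\bigcup_{\varphi \,\in\, C(X,\mathbb{R})} \mathcal{I}(T,\varphi) \subseteq X \setminus G$ is meagre. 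The functional $\mathcal{F}(\varphi) := C_\varphi$ is linear, positive, normalized, and $T$-invariant (Birkhoff averages of $\varphi$ and $\varphi\circ T$ differ by $O(1/n)$), and Riesz--Markov representation delivers the $T$-invariant Borel probability measure $\mu$.

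For item~(c), the equivalence ``$\mathcal{I}(T,\varphi)$ meagre $\Leftrightarrow \ell_\varphi^* = L_\varphi^*$'' identifies the set $\{\varphi \in C(X,\mathbb{R}) : \mathcal{I}(T,\varphi) \neq \emptyset \text{ and } \ell_\varphi^* = L_\varphi^*\}$ with $\mathcal{E} := \{\varphi \in \mathcal{H}(X,T) : \mathcal{I}(T,\varphi) \text{ is meagre}\}$. If $\mathcal{E} \neq \emptyset$, fix $\varphi_0 \in \mathcal{E}$: then $\bigcap_{\varphi \,\in\, \mathcal{H}(X,T)} \mathcal{I}(T,\varphi) \subseteq \mathcal{I}(T,\varphi_0)$ is meagre. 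If $\mathcal{E} = \emptyset$, every $\varphi \in \mathcal{H}(X,T)$ satisfies $\ell_\varphi^* < L_\varphi^*$, and the inclusion produced in~(a) gives $\ell_\varphi(x) \leqslant \ell_\varphi^* < L_\varphi^* \leqslant L_\varphi(x)$ for every $x \in X_\Delta$ and every such $\varphi$, so $X_\Delta \subseteq \bigcap_{\varphi \,\in\, \mathcal{H}(X,T)} \mathcal{I}(T,\varphi)$, and the latter set is Baire generic. The main obstacle will be the density of the sets $G_{m,k}$ in~(a): transferring weak$^*$ accumulation along a dense orbit by a finite shift is a standard idea, but the bookkeeping requires exhibiting an explicit representative $t_m \in \Trans(X,T)$ with $\nu_m \in V_T(t_m)$ for every $\nu_m$ in the countable dense subset of $V_{\mathrm{Trans}}$ (rather than merely in its weak$^*$ closure), and a careful estimation of the weak$^*$ error caused by deleting $K$ initial delta-masses from an averaging window. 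Once~(a) is secured, items (b) and (c) assemble cleanly from Corollary~\ref{maincorollary_sieve}, separability of $C(X,\mathbb{R})$, and Riesz--Markov.
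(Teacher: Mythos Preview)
Your proposal is correct and follows essentially the same route as the paper: the paper cites \cite{Win10,HLT21} for the Baire genericity of $X_\Delta$ where you instead reconstruct that argument directly (open dense sets $G_{m,k}$ built from a countable dense family in $V_{\mathrm{Trans}}$ and the shift-by-$K$ estimate), and your treatments of (b) and (c) --- the countable-dense three-$\varepsilon$ reduction, Riesz--Markov for the invariant measure, and the case split on whether some $\varphi\in\mathcal{H}(X,T)$ has $\ell_\varphi^{*}=L_\varphi^{*}$ --- match the paper's Lemma~\ref{lemma_Baire generic_empty} and the end of Section~7 almost verbatim.
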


\smallskip

\begin{remark}\label{remark_minimal} It follows from Corollary \ref{maincorollary_compact} that, if $(X, d)$ is a compact metric space and $T : X \to X$ is a continuous minimal map, then either $T$ is uniquely ergodic or the set $\bigcap_{\varphi \,\in\, \mathcal{H}(X,T)} \, {\mathcal I}(T,\varphi)$ is Baire generic.
\end{remark}


We may ask whether the notion of $(T,\varphi)$-sensitivity (cf. Definition~\ref{defTphisensitive}) is somehow related to the classical concepts of sensitivity to initial conditions and expansiveness. Let us recall these two notions.

\begin{definition}\label{def:sensitive}
Let $T: X \to X$ be a continuous map on a compact metric space $(X,d)$. We say that:
\begin{itemize}
\item $T$ has \emph{sensitivity to initial conditions} if there exists $\varepsilon > 0$ such that, for every $x \in X$ and any $\delta > 0$, there is $z \in B(x,\delta)$ satisfying
$$\sup_{n\,\in\,\mathbb N} \, d(T^{n}(x), T^{n}(z)) > \varepsilon.$$
\item $T$ is (positively) \emph{expansive} if there exists $\varepsilon > 0$ such that, for any points $x,z \in X$ with $x \neq z$, one has
$$\sup_{n\,\in\,\mathbb N} \,d(T^{n}(x), T^{n}(z)) > \varepsilon.$$
\end{itemize}
\end{definition}

It is clear that an expansive map has sensitivity to initial conditions. Our next result shows that the condition of $(T,\varphi)$-sensitivity often implies sensitivity to initial conditions.

\begin{theorem}\label{sensitive}
Let $X$ be a compact metric space, $T : X \to X$ be a continuous map and $\varphi \in C(X,\mathbb R)$. If $X$ is $(T,\varphi)$-sensitive then either $X$ has sensitivity to initial conditions or ${\mathcal I}(T,\varphi)$ has non-empty interior. In particular, if
$T$ has a dense set of periodic orbits and $X$ is $(T,\varphi)$-sensitive, then $X$ has sensitivity to initial conditions.
\end{theorem}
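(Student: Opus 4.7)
The plan is to show directly that, whenever $X$ is $(T,\varphi)$-sensitive and $T$ fails to exhibit sensitivity to initial conditions, the set $\mathcal I(T,\varphi)$ must contain an open ball. Let $\varepsilon_{0}>0$ and the dense sets $A,B\subset X$ be the data supplied by Definition~\ref{defTphisensitive}. Since $\varphi$ is uniformly continuous on the compact space $X$, first fix $\eta>0$ so that $d(p,q)<\eta$ implies $|\varphi(p)-\varphi(q)|<\varepsilon_{0}/4$. Applying the negation of sensitivity to the constant $\eta$ produces $x_{0}\in X$ and $\delta>0$ with
$$\sup_{n\in\mathbb{N}} d(T^{n}(x_{0}),T^{n}(z))\,\leq\,\eta \qquad \text{for every } z\in B(x_{0},\delta).$$

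I would then convert this orbital closeness into closeness of the Ces\`aro averages: a term-by-term estimate gives
$$\Big|\tfrac{1}{n}\sum_{j=0}^{n-1}\varphi(T^{j}(x_{0}))-\tfrac{1}{n}\sum_{j=0}^{n-1}\varphi(T^{j}(z))\Big|\,<\,\varepsilon_{0}/4$$
for every $n\in\mathbb{N}$ and every $z\in B(x_{0},\delta)$. Passing to subsequential limits shows that the Hausdorff distance between the accumulation sets satisfies $d_{H}(W_{\varphi}(x_{0}),W_{\varphi}(z))\leq \varepsilon_{0}/4$. By density, pick $a\in A\cap B(x_{0},\delta)$ and $b\in B\cap B(x_{0},\delta)$; the $(T,\varphi)$-sensitivity hypothesis yields
$$\diam\bigl(W_{\varphi}(a)\cup W_{\varphi}(b)\bigr)\,\geq\, \sup_{r\in W_{\varphi}(a),\, s\in W_{\varphi}(b)}|r-s|\,>\,\varepsilon_{0},$$
while the two Hausdorff bounds just derived force
$$\diam\bigl(W_{\varphi}(a)\cup W_{\varphi}(b)\bigr)\,\leq\, \diam\bigl(W_{\varphi}(x_{0})\bigr)+\varepsilon_{0}/2.$$
Combining these inequalities gives $\diam(W_{\varphi}(x_{0}))>\varepsilon_{0}/2$. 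Applying the Hausdorff bound once more, for an arbitrary $z\in B(x_{0},\delta)$ one gets $\diam(W_{\varphi}(z))\geq \diam(W_{\varphi}(x_{0}))-\varepsilon_{0}/2>0$, so $W_{\varphi}(z)$ is not a singleton; that is, $z\in \mathcal I(T,\varphi)$, and therefore $B(x_{0},\delta)\subset \mathcal I(T,\varphi)$.

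For the final assertion, note that the Ces\`aro averages at any periodic point are eventually periodic and in particular convergent, so $\per(T)\cap \mathcal I(T,\varphi)=\emptyset$. If $\per(T)$ is dense in $X$, no open ball can lie inside $\mathcal I(T,\varphi)$; this rules out the second alternative of the dichotomy and forces $T$ to have sensitivity to initial conditions. The only delicate point is the Hausdorff bookkeeping that transfers pointwise closeness of two Ces\`aro sequences into topological closeness of their accumulation sets and then into diameter estimates: the constant $\varepsilon_{0}/4$ has to survive two successive applications of the Hausdorff bound, which is exactly why the uniform continuity modulus $\eta$ must be chosen below $\varepsilon_{0}/4$ rather than, say, $\varepsilon_{0}/2$. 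Beyond this bookkeeping, the argument uses only uniform continuity of $\varphi$ and elementary properties of accumulation sets of bounded real sequences.
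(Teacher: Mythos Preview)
Your proof is correct and takes a genuinely different route from the paper's. The paper argues by contradiction: it assumes simultaneously that $T$ is not sensitive and that $\mathcal I(T,\varphi)$ has empty interior, then uses the latter to place a \emph{regular} point $c$ inside the equicontinuity neighborhood $U_{x_\theta}$; comparing the averages of $a$ and $b$ to the (convergent) averages of $c$ forces $|r_a-r_b|\leqslant\varepsilon$, contradicting $(T,\varphi)$-sensitivity. Your argument is direct: you never invoke a regular point, and instead transfer the orbital closeness into a Hausdorff bound $d_H(W_\varphi(x_0),W_\varphi(z))\leqslant\varepsilon_0/4$, from which the two applications of the triangle inequality yield first $\diam W_\varphi(x_0)>\varepsilon_0/2$ and then $\diam W_\varphi(z)>0$ for every $z$ in the ball. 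This buys a little more than the paper: you exhibit the open set inside $\mathcal I(T,\varphi)$ explicitly and with a uniform lower bound on the oscillation, rather than merely reaching a contradiction. One minor point of bookkeeping: since the sensitivity definition quantifies over $n\in\mathbb N$, the $j=0$ term in the Ces\`aro sum is not covered by the bound $d(T^n(x_0),T^n(z))\leqslant\eta$; either shrink $\delta$ to $\min(\delta,\eta)$, or note that this single term contributes $O(1/n)$ and so does not affect the accumulation sets $W_\varphi$.
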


The previous discussion together with well known examples yield the following scheme of connections:
\begin{equation}\label{eq:diagram}
\begin{array}{ccc}
\text{$T$ is strongly transitive} 						&  &   \\
\qquad \text{ and has dense periodic orbits} 						&  &  \\
	 \Downarrow					&  &  \\
\text{$T$ has dense periodic orbits} 						&  &   \\
\text{and dense pre-orbits} 						&  & \text{Expansiveness}  \\
	 \Downarrow					&  &  \Downarrow \; \; \not\Uparrow\\
\exists \,\varphi\in C(X, \mathbb R) \colon \, X \text{ is } (T,\varphi)-{ sensitive }  & \quad \Rightarrow  \quad &  \text{Sensitivity to initial conditions}.  \\
\smallskip
\text{and} \; \text{int}({\mathcal I}(T,\varphi))=\emptyset  &  &
\end{array}
\end{equation}

When $X$ is a compact topological
manifold there is a link between expansiveness and sensitivity regarding a well chosen continuous map $\varphi: X \to \mathbb R$. Indeed, Coven and Reddy (cf. \cite{CoRe}) proved that, if $T: X \to X$ is a continuous and expansive map acting on compact topological
manifold, then there exists a metric $\tilde d$ compatible with the topology of $(X,d)$ such that $T: (X,\tilde d) \to (X,\tilde d)$ is a Ruelle-expanding map: there are constants $\lambda > 1$ and $\delta_0 > 0 $ such that, for all $x, y, z\in X$, one has
\begin{itemize}
\item $\tilde d(T(x),T(y)) \geqslant \lambda \tilde d(x,y)$ whenever $\tilde d(x,y) < \delta_0$;
\smallskip
\item $B(x,\delta_0) \cap T^{-1}(\{z\})$ is a singleton whenever $\tilde d(T(x),z) < \delta_0$.
\end{itemize}
In particular, if $X$ is connected then $T$ is topologically mixing. Moreover, as Ruelle-expanding maps admit finite Markov partitions and are semiconjugate to subshifts of finite type, one can choose $\varphi \in C(X, \mathbb R)$ such that ${\mathcal I}(T,\varphi)$ is a Baire generic subset of $X$. Besides, the interior of ${\mathcal I}(T,\varphi)$ is empty by the denseness of the set of periodic points of $T$. Therefore, by Corollary~\ref{maincorollary_equivalence}:

\begin{corollary}\label{cor:Riemann}
Let $(X,d)$ be a compact connected
topological manifold and $T: X\to X$ be a continuous and expansive map. Then there exists $\varphi\in C(X, \mathbb R)$ such that $X$ is $(T,\varphi)$-sensitive.
\end{corollary}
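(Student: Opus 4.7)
The plan is to chain together the three ingredients gathered in the paragraph immediately preceding the statement, and then finish with Corollary~\ref{maincorollary_equivalence}. First I would apply the Coven--Reddy theorem to replace $d$ by a topologically equivalent metric $\tilde d$ with respect to which $T\colon(X,\tilde d)\to(X,\tilde d)$ is Ruelle-expanding. Connectedness of $X$ then upgrades this to topological mixing of $T$ and, in particular, to the existence of a dense orbit, so $\Trans(X,T)\neq\emptyset$. This places us in the hypotheses of Corollary~\ref{maincorollary_equivalence}; it also ensures, by the standard argument for topologically mixing Markov systems, that the periodic points of $T$ form a dense subset of $X$.

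Next, the finite Markov partition associated with the Ruelle-expanding map furnishes a continuous surjective semiconjugacy $\pi\colon\Sigma_A\to X$ from a topologically mixing one-sided subshift of finite type $(\Sigma_A,\sigma)$, which enjoys the specification property. For such shifts the classical multifractal machinery (as used in the references cited in the introduction) produces H\"older continuous observables whose Ces\`aro averages diverge on a Baire generic subset of $\Sigma_A$; selecting such an observable in the form $\tilde\varphi=\varphi\circ\pi$ and transferring Baire genericity through $\pi$, one obtains $\varphi\in C(X,\mathbb R)$ with $\mathcal I(T,\varphi)$ Baire generic in $X$. The density of periodic orbits, each of which is regular, forces $X\setminus\mathcal I(T,\varphi)$ to be dense, hence $\mathcal I(T,\varphi)$ has empty interior; this matches the lower-left box of the diagram~\eqref{eq:diagram}, although it is not strictly needed to conclude.

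Finally, with the dense-orbit hypothesis of Corollary~\ref{maincorollary_equivalence} available and $\mathcal I(T,\varphi)$ Baire generic in $X$, the implication $(ii)\Rightarrow(i)$ of that corollary delivers at once that $X$ is $(T,\varphi)$-sensitive. The main obstacle sits in the middle step: the semiconjugacy $\pi$ is typically non-injective on the boundaries of the Markov atoms, so some care is required either to build the H\"older observable $\tilde\varphi$ directly as a pullback $\varphi\circ\pi$ of a continuous map on $X$, or to argue that the pushforward under $\pi$ of the symbolic Baire generic irregular set remains Baire generic in $X$. Once this well-understood technical point is handled, the corollary follows by a single invocation of Corollary~\ref{maincorollary_equivalence}.
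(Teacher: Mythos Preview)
Your proposal is correct and follows essentially the same route as the paper's own argument: Coven--Reddy to obtain a Ruelle-expanding model, connectedness to secure topological mixing (hence a dense orbit), the finite Markov partition and semiconjugacy to a subshift of finite type to produce a $\varphi\in C(X,\mathbb R)$ with $\mathcal I(T,\varphi)$ Baire generic, and finally the implication $(ii)\Rightarrow(i)$ of Corollary~\ref{maincorollary_equivalence}. The paper also records the empty-interior observation via density of periodic points, just as you do, and likewise does not actually need it for the final step. The only difference is one of explicitness: you flag the boundary-identification issue in the semiconjugacy $\pi$ and the need to realize the symbolic observable as a genuine pullback $\varphi\circ\pi$, whereas the paper simply asserts that such a $\varphi$ can be chosen.
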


It is still an open question whether for each expansive map $T$ on a compact metric space $X$ there exists $\varphi\in C(X, \mathbb R)$ such that $X$ is $(T,\varphi)$-sensitive. Although we have no examples, it is likely to exist continuous maps on compact metric spaces which have sensitivity to initial conditions but for which $X$ is not $(T,\varphi)$-sensitive for every $\varphi\in C(X, \mathbb R)$.

A further consequence of our results concerns the irregular sets for
continuous maps satisfying the strong transitivity condition (see Definition~\ref{def:transitivity}).
 equivalent to say that $T$ is minimal.

\begin{corollary}\label{cor:minimalFV}
Let $T: X\to X$ be a continuous map on a compact metric space $X$. If $T$ is strongly transitive and $\varphi \in C(X,\mathbb R)$, then either
$\mathcal I(T,\varphi)=\emptyset$ or $\mathcal I(T,\varphi)$ is a Baire generic subset of $X$.
\end{corollary}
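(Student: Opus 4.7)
The plan is to strengthen the dichotomy of Corollary~\ref{maincorollary_sieve}(i) by showing that, under strong transitivity, $\mathcal{I}(T,\varphi)$ cannot be a non-empty meagre set. Concretely, I will argue that as soon as $\mathcal{I}(T,\varphi)\neq\emptyset$ the space $X$ is $(T,\varphi)$-sensitive; Theorem~\ref{CV_mod} then immediately yields that $\mathcal{I}(T,\varphi)$ is Baire generic, proving the dichotomy.

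First, I would fix $x_{0}\in\mathcal{I}(T,\varphi)$ and set
$$\alpha \,=\, \liminf_{n\to+\infty}\,\frac{1}{n}\sum_{j=0}^{n-1}\varphi(T^{j}(x_{0})), \qquad \beta \,=\, \limsup_{n\to+\infty}\,\frac{1}{n}\sum_{j=0}^{n-1}\varphi(T^{j}(x_{0})),$$
so that $\alpha<\beta$ and $\{\alpha,\beta\}\subset W_{\varphi}(x_{0})$. The key step is then to produce a dense witness set for sensitivity: the backward orbit
$$A \,:=\, \bigcup_{n\,\geqslant\, 0}\, T^{-n}(\{x_{0}\})$$
is dense in $X$, because strong transitivity provides, for every non-empty open $U\subset X$, some $n\geqslant 0$ and $u\in U$ with $T^{n}(u)=x_{0}$, whence $u\in U\cap A$.

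Next I would transfer the oscillation of the Birkhoff averages from $x_{0}$ to every $a\in A$. If $T^{n}(a)=x_{0}$, a short telescoping estimate that uses only $\|\varphi\|_{\infty}<+\infty$ to control the finite prefix gives
$$\frac{1}{N}\sum_{j=0}^{N-1}\varphi(T^{j}(a)) \,-\, \frac{1}{N-n}\sum_{k=0}^{N-n-1}\varphi(T^{k}(x_{0})) \,\longrightarrow\, 0 \quad \text{as } N\to+\infty,$$
so $W_{\varphi}(a)=W_{\varphi}(x_{0})\supset\{\alpha,\beta\}$ and hence $\sup_{r,s\in W_{\varphi}(a)}|r-s|\geqslant\beta-\alpha>0$. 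Taking $B=A$ and $\varepsilon=(\beta-\alpha)/2$, Definition~\ref{defTphisensitive} is satisfied, so $X$ is $(T,\varphi)$-sensitive. A direct appeal to Theorem~\ref{CV_mod} (applicable because $X$ is compact, hence a Baire metric space, and $\varphi\in C(X,\mathbb{R})\subset C^{b}(X,\mathbb{R})$) then delivers Baire genericity of $\mathcal{I}(T,\varphi)$.

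The only point that really needs care is using the full strength of the hypothesis: mere transitivity would give only a dense forward orbit, which is not enough here, whereas strong transitivity hands us, essentially by definition, the density of the \emph{backward} orbit of the chosen irregular point $x_{0}$, and this is precisely what powers the sensitivity argument. Everything else reduces to a routine boundedness estimate and a direct invocation of Theorem~\ref{CV_mod}.
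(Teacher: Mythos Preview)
Your proposal is correct and follows essentially the same approach as the paper's proof: both fix an irregular point $x_0$, use strong transitivity to show that its pre-orbit $\bigcup_{n\geqslant 0}T^{-n}(\{x_0\})$ is dense, observe that every point in this pre-orbit inherits the same set $W_\varphi(x_0)$ of accumulation values for the Ces\`aro averages, and conclude $(T,\varphi)$-sensitivity so that Theorem~\ref{CV_mod} applies. Your choice of $\varepsilon=(\beta-\alpha)/2$ is in fact slightly cleaner than the paper's, which takes $\varepsilon=L_\varphi(x_0)-\ell_\varphi(x_0)$ and then needs a non-strict inequality.
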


We note that, as strongly transitive homeomorphisms on a compact metric space $X$ are minimal, Corollary~\ref{cor:minimalFV} extends the information in Remark~\ref{rmk:minimal} to strongly transitive continuous maps on compact metric spaces.

\section{Proof of Theorem~\ref{CV_mod}}

The argument is a direct adaptation of the one in \cite[Theorem~A]{CV21}. Suppose that there exist dense subsets $A, B$ of $Y$ and $\varepsilon > 0$ such that for any $(a,b) \in A \times B$ there exist $(r_{a},r_{b}) \in \{\varphi_{n}(a):  n \geq 1 \}' \times \{\varphi_{n}(b):  n \geq 1 \}'$ satisfying $|r_{a} -r_{b}| > \varepsilon$. Fix $0 <\eta < \frac{\varepsilon}{3}$. Since the maps $\varphi_n$ are continuous, given an integer $N \in \mathbb{N}$ the set
$$\Lambda_{N} =  \Big\{y \in Y: \,|\varphi_{n}(y) - \varphi_{m}(y) | \leqslant \eta \quad \forall  m,n \geqslant N \Big\}$$
is closed in $Y$. Moreover:

\begin{lemma}\label{eq:interior.empty}
$\Lambda_{N}$ has empty interior for every $N \in \mathbb{N}$.
\end{lemma}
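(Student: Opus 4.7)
The plan is to argue by contradiction: if $\Lambda_N$ contains a non-empty open set $U$, I will use the continuity of the single map $\varphi_N$ to produce a pair $(a,b)\in A\times B$ with $a,b\in U$ for which the entire tail $(\varphi_n(a)-\varphi_n(b))_{n\geqslant N}$ stays within a controlled error, and this will contradict the $\Phi$-sensitivity of $Y$. The underlying idea is that on $\Lambda_N$ the tail $(\varphi_n(y))_{n\geqslant N}$ is $\eta$-Cauchy uniformly in $y$, so comparing two such tails reduces essentially to comparing the single values $\varphi_N(a)$ and $\varphi_N(b)$, and these can be made arbitrarily close by the continuity of $\varphi_N$ together with the density of $B$.

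Concretely, first fix $a\in A\cap U$, available by density of $A$, and pick $\gamma>0$ with $3\eta+\gamma<\varepsilon$, which is possible because $\eta<\varepsilon/3$. By continuity of $\varphi_N$ at $a$ there exists $\delta>0$ with $B(a,\delta)\subset U$ and $|\varphi_N(y)-\varphi_N(a)|<\gamma$ for every $y\in B(a,\delta)$. Density of $B$ then supplies some $b\in B\cap B(a,\delta)$, which automatically lies in $\Lambda_N$. Combining the three bounds $|\varphi_n(a)-\varphi_N(a)|\leqslant\eta$, $|\varphi_N(a)-\varphi_N(b)|<\gamma$ and $|\varphi_N(b)-\varphi_n(b)|\leqslant\eta$, the triangle inequality yields
$$|\varphi_n(a)-\varphi_n(b)|\,<\,2\eta+\gamma\qquad\text{for every }n\geqslant N.$$

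To pass from values to accumulation points, fix $r\in W_\Phi(a)$, write $r=\lim_k\varphi_{n_k}(a)$ with $n_k\to\infty$, and invoke $\limsup_n\|\varphi_n\|_\infty<+\infty$ together with Bolzano--Weierstrass to extract a further subsequence along which $\varphi_{n_{k_j}}(b)\to s$ for some $s\in\mathbb{R}$. The uniform estimate above forces $|r-s|\leqslant 2\eta+\gamma$, while the $\Lambda_N$-condition for $b$ forces every $s'\in W_\Phi(b)$ to satisfy $|s-s'|\leqslant\eta$, because the tail $(\varphi_n(b))_{n\geqslant N}$ has diameter at most $\eta$ and both $s$ and $s'$ arise as limits of subsequences drawn from that tail (for $s'$, after noting that any non-isolated accumulation point is a limit along an index sequence tending to infinity). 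Hence $|r-s'|\leqslant 3\eta+\gamma<\varepsilon$ for arbitrary $r\in W_\Phi(a)$ and $s'\in W_\Phi(b)$, which contradicts $\Phi$-sensitivity at the pair $(a,b)\in A\times B$.

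The main obstacle I anticipate is the bookkeeping of constants: the argument closes only because the threshold $\eta<\varepsilon/3$ leaves the definite slack $\varepsilon-3\eta>0$ into which the continuity error $\gamma$ can be inserted, and one must work with the continuity of the single function $\varphi_N$ rather than any uniform control on the family $(\varphi_n)_{n\in\mathbb{N}}$, which is unavailable. A secondary technical point is the careful treatment of $W_\Phi$ as a set of \emph{non-isolated} accumulation points, which is why the extracted limit $s$ and the given $s'$ must both be traced back to tails with indices $n\geqslant N$ before applying the $\Lambda_N$ bound.
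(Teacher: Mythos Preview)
Your proof is correct and follows essentially the same strategy as the paper's: assume $\Lambda_N$ has non-empty interior, pick $a\in A$ inside it, use continuity of the single map $\varphi_N$ to find a nearby $b\in B$ also in $\Lambda_N$, and derive a contradiction with $\Phi$-sensitivity by bounding the accumulation points in $W_\Phi(a)$ and $W_\Phi(b)$ through the anchor value $\varphi_N$. The paper's write-up is slightly more direct---it bounds $|r_a-\varphi_N(a)|\leqslant\eta$ and $|r_b-\varphi_N(b)|\leqslant\eta$ immediately (by fixing $m=N$ in the $\Lambda_N$ inequality and passing to the limit), avoiding your intermediate extraction of $s$ via Bolzano--Weierstrass; it also simply takes the continuity tolerance equal to $\eta$ rather than introducing a separate $\gamma$, since $3\eta<\varepsilon$ already gives the needed strict inequality. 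These are purely cosmetic differences.
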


\begin{proof}
Assume that there exists $N \in \mathbb{N}$ such that $\Lambda_{N}$ has non-empty interior (which we abbreviate into $\intt (\Lambda_{N})\neq \emptyset$). Hence there exists $a \in A$ such that $a  \in   \intt (\Lambda_{N})$. Since $\varphi_{N}$ is continuous, there exists $\delta_{N} >0$ such that $|\varphi_{N}(a) - \varphi_{N}(y)| <  \eta$ for every $y \in Y$ satisfying $d(a,y)  < \delta_{N}$. By the denseness of $B$, one can choose $b \in B$ such that $b \in \intt (\Lambda_{N})$ and $d(a,b) < \delta_{N}$. Besides, according to the the definition of $\Lambda_{N}$ one has
$$|\varphi_{n}(a) - \varphi_{m}(a) | \leqslant \eta \quad \quad \text{ and } \quad \quad |\varphi_{n}(b) - \varphi_{m}(b)| \leqslant \eta \quad \forall, m, n \geqslant N.$$

Given $(a,b) \in  A\times B$, choose $ (r_{a},r_{b}) \in \{\varphi_{n}(a):  n \geq 1 \}' \times \{\varphi_{n}(b):  n \geq 1 \}'$ satisfying $|r_{a}-r_{b}| > \varepsilon$. Fixing $m = N$, taking the limit as $n$ goes to $+\infty$ in the first inequality along a subsequence converging to $r_{a}$ and taking the limit as $n$ tends to $+\infty$ in the second inequality along a subsequence convergent to $r_{b}$, we conclude that
$$|r_{a} - \varphi_{N}(a)| \leqslant \eta \quad \quad \text{ and }\quad \quad |r_{b} - \varphi_{N}(b)| \leqslant \eta.
$$
Therefore,
$$\varepsilon \,<\, |r_{a} - r_{b}| \,\leqslant\, |\varphi_{N}(a) - r_{a} | + |\varphi_{N}(b) - r_{b} | + |\varphi_{N}(a) - \varphi_{N}(b)| \,\leqslant\, 3 \eta$$
contradicting the choice of $\eta$. Thus $\Lambda_{N} $  must have empty interior.
\end{proof}

We can now finish the proof of Theorem \ref{CV_mod}. Using the fact that $\limsup_{n\,\to\,+\infty} \|\varphi_n\|_\infty <+\infty$, one deduces that $Y \setminus {\mathcal I}(\Phi) \subset \bigcup_{N=1}^{\infty} \Lambda_{N}$. Thus, by Lemma~\ref{eq:interior.empty}, the set of $\Phi$-regular points is contained in a countable union of closed sets with empty interior. This shows that ${\mathcal I}(\Phi)$ is Baire generic, as claimed.
The second statement in the theorem is a direct consequence of the first one. \hfill $\square$

\begin{remark}\label{remark.CV_mod}
It is worth mentioning that the proof of Theorem~\ref{CV_mod} also shows that one has $\{y \in Y: \limsup_{n} \varphi_{n}(y) - \liminf_{n} \varphi_{n}(y) < \eta \}  \subseteq \bigcup_{N=1}^{\infty} \Lambda_{N}$, and so the set
$\{y \in Y: \limsup_{n} \varphi_{n}(y) - \liminf_{n} \varphi_{n}(y) \geqslant \eta \}$ is Baire generic in $Y$.
\end{remark}

\begin{remark}\label{rmk:flows}
The argument used in the proof of Theorem~\ref{CV_mod} adapts naturally to the context of continuous-time dynamical systems. This fact will be used later, when applying Theorem~\ref{CV_mod} to geodesic flows on non-positive curvature (see Example~\ref{ex:geodesicflows}).
\end{remark}


\section{Proof of Corollary~\ref{maincorollary_auxiliar}}

We start showing that the existence of an irregular point with respect to an observable $\varphi$ whose orbit by $T$ is dense is enough to ensure that $Y$ is $(T,\varphi)$-sensitive.

\begin{lemma}\label{lemma1}
Let $(Y, d)$ be a Baire metric space,  $T : Y \to Y$ be a continuous map such that $(Y,T)$ has a dense orbit, and $\varphi \in C^b(Y,\mathbb R)$.
If ${\mathcal I}(T,\varphi) \cap \Trans (Y,T) \neq \emptyset $ then $Y$ is $(T,\varphi)$-sensitive.
\end{lemma}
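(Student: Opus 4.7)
The plan is to exploit the existence of a single irregular point with a dense orbit to manufacture the dense sets $A,B$ and the separation $\varepsilon>0$ required by Definition~\ref{defTphisensitive}.

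Fix $x_0 \in {\mathcal I}(T,\varphi) \cap \Trans(Y,T)$ and write $\varphi_n := \frac{1}{n}\sum_{j=0}^{n-1}\varphi \circ T^j$ for short. Since $\varphi \in C^b(Y,\mathbb{R})$, the sequence $(\varphi_n(x_0))_{n\in\mathbb{N}}$ is bounded; as it does not converge by the choice of $x_0$, its accumulation set $W_\varphi(x_0)$ must contain at least two distinct values $r\neq s$. Set $\varepsilon := |r-s|/2 > 0$.

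The key step is to show that $W_\varphi$ is constant along the orbit of $x_0$, that is, $W_\varphi(T^j x_0) = W_\varphi(x_0)$ for every $j \geqslant 0$. For fixed $j$, the direct computation
\[
\varphi_n(T^j x_0) \;=\; \frac{n+j}{n}\,\varphi_{n+j}(x_0) \;-\; \frac{1}{n}\sum_{k=0}^{j-1}\varphi(T^k x_0)
\]
shows that as $n\to +\infty$ the second summand tends to $0$ (since $\|\varphi\|_\infty < +\infty$) while the factor $\frac{n+j}{n}$ tends to $1$; hence $(\varphi_n(T^j x_0))_n$ and $(\varphi_{n+j}(x_0))_n$ share exactly the same accumulation points, yielding $W_\varphi(T^j x_0) = W_\varphi(x_0)$.

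With this at hand, take $A = B := \{T^j(x_0) \colon j \in \mathbb{N}\cup\{0\}\}$, which is dense in $Y$ by the assumption $x_0 \in \Trans(Y,T)$. For every pair $(a,b)\in A\times B$ one has $\{r,s\}\subseteq W_\varphi(a)\cap W_\varphi(b)$, and therefore
\[
\sup_{r'\,\in\, W_\varphi(a),\; s'\,\in\, W_\varphi(b)} |r'-s'| \;\geqslant\; |r-s| \;>\; \varepsilon,
\]
which is precisely the $(T,\varphi)$-sensitivity of $Y$. No real obstacle is anticipated; the only point that requires care is the orbit-invariance of $W_\varphi$, and this is handled by the elementary telescoping identity above.
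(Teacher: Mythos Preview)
Your proof is correct and follows essentially the same approach as the paper: take the dense orbit of an irregular transitive point as both $A$ and $B$, and use the invariance of the Ces\`aro averages' accumulation set along this orbit to produce the required separation. The only cosmetic difference is that the paper phrases the invariance in terms of $\ell_\varphi$ and $L_\varphi$ rather than the full set $W_\varphi$, whereas you spell out the telescoping identity explicitly.
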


\begin{proof}
Suppose that $\varphi \in C^{b}(Y,\mathbb{R})$ and ${\mathcal I}(T,\varphi) \cap \Trans (Y,T) \neq \emptyset$. Let $y \in Y$ be a point in this intersection. Then there is $\varepsilon >0$ such that $\varepsilon <  \limsup_{n \,\to\, +\infty} \varphi_{n}(y) - \liminf_{n \,\to \,+\infty} \varphi_{n}(y)$. Since $\varphi$ is a bounded function, the values
$$\liminf_{n \,\to\,+ \infty} \,\frac{1}{n} \,\sum_{j=0}^{n-1} \,\varphi(T^{j}(z)) \quad \text{ and } \quad \limsup_{n \,\to\, +\infty} \,\frac{1}{n} \,\sum\limits_{j=0}^{n-1} \,\varphi(T^{j}(z))$$
are constant for every $z \in \{T^{j}(y): j \in \mathbb{N}\cup\{0\}\}$. This invariance, combined with the fact that $\{T^{n}(y): n \in \mathbb{N}\cup\{0\}\}$ is a dense subset of $Y$, implies that $Y$ is $(T,\varphi)$-sensitive.
\end{proof}

Let us resume the proof of Corollary~\ref{maincorollary_auxiliar}.

\noindent $(i)\,$ We will adapt the argument in the proof of \cite[Lemma 3.4]{HLT21}. Take $y \in \Trans (Y,T)$. Assume that $Y$ has an isolated point and $\mathcal{D}(Y,T) \neq \emptyset$. Then there exists $N \in \mathbb{N}\cup\{0\}$ such that $T^{N}(y)$ is an isolated point of $Y$. Take $\psi \in \mathcal{D}(Y,T)$ whose set ${\mathcal I}(T,\psi)$ is dense in $Y$. Since $\{T^{N}(y)\}$ is an open subset of $Y$, one has
$\{T^{N}(y) \} \cap {\mathcal I}(T,\psi) \neq \emptyset$, so $T^{N}(y)$ belongs to ${\mathcal I}(T,\psi)$. Therefore, $T^{N}(y) \in \bigcap_{\psi \,\in\, \mathcal{D}(Y,T)} {\mathcal I}(T,\psi).$

In fact, more is true: $y \in \bigcap_{\psi\, \in \,\mathcal{D}(Y,T)} {\mathcal I}(T,\psi).$ Indeed, suppose that there exists $\psi_0 \in D(Y,T)$ such that $y \in Y \setminus {\mathcal I}(T,\psi_0)$. Consider the aforementioned integer $N \in \mathbb{N}\cup\{0\}$ such that $T^{N}(y) \in \bigcap_{\psi \,\in \,\mathcal{D}(Y,T)} {\mathcal I}(T,\psi)$. As $Y \setminus I(T,\psi)$ is $T$-invariant and $y \in Y \setminus {\mathcal I}(T,\psi_0)$, we have $T^{N}(y) \in Y \setminus {\mathcal I}(T,\psi_0)$. This contradicts the choice of $N$.
This way we have shown that $\Trans (Y,T) \subseteq \bigcap_{\psi\,\in\, \mathcal{D}(Y,T)} {\mathcal I}(T,\psi)$.

We are left to prove that, if $(Y,d)$ has an isolated point and $\mathcal{D}(Y,T) \neq \emptyset$, then $\mathcal{R}(Y,T) = \mathcal{D}(Y,T)$. Suppose that $\mathcal{D}(Y,T) \neq \emptyset$. As we have just proved, $\Trans (Y,T) \subseteq \bigcap_{\psi\,\in \,\mathcal{D}(Y,T)} {\mathcal I}(T,\psi)$. This implies that for each $\psi\in \mathcal{D}(Y,T)$ one has ${\mathcal I}(T,\psi) \cap \Trans (Y,T) \neq \emptyset$. By Lemma \ref{lemma1} and Theorem \ref{CV_mod} we conclude that ${\mathcal I}(T,\psi)$ is Baire generic, so $\psi \in \mathcal{R}(Y,T)$.

\smallskip

\noindent $(ii)\,$ Suppose that $\bigcap_{\varphi \,\in\, \mathfrak{F}} {\mathcal I}(T,\varphi)$ is Baire generic. By hypothesis, there exists $y$ in $Y$ such  that  $\Omega_{y} = \overline{\{T^{n}(y): n \in \mathbb{N}\cup\{0\}\}}  = Y$, so $Y$ is separable. This implies that, if $\omega(y) = \Omega_{y}$, then $Y$ does not have isolated points, and so $T$ is transitive. Since $Y$ is a Baire separable metric space,
$\Trans (Y,T)$ is Baire generic as well. Therefore, $\Trans (Y,T) \cap \bigcap_{\varphi \,\in\, \mathfrak{F}} {\mathcal I}(T,\varphi)$ is also Baire generic. Thus $\Trans (Y,T) \cap \bigcap_{\varphi \,\in \,\mathfrak{F}} {\mathcal I}(T,\varphi)$ is not empty.

Assume now that $\omega(y) \subsetneq  \Omega_{y}$. Then $Y$ has an isolated point. As $\emptyset \neq \mathfrak{F} \subseteq \mathcal{D}(Y,T)$ and $Y$ has an isolated point, by item $(i)$ we know that $\Trans (Y,T) \subseteq \bigcap_{\varphi\, \in \,\mathcal{D}(Y,T)}{\mathcal I}(T,\varphi)$. Moreover, as $\mathfrak{F} \subseteq \mathcal{D}(Y,T)$, one has $\bigcap_{\varphi \,\in\, \mathcal{D}(Y,T)} {\mathcal I}(T,\varphi) \subseteq \bigcap_{\varphi\, \in \,\mathfrak{F}} {\mathcal I}(T,\varphi)$. Thus, $\Trans (Y,T) \subseteq \bigcap_{\varphi \,\in \,\mathfrak{F}} {\mathcal I}(T,\varphi)$. 
\hfill $\square$


\section{Proof of Corollary~\ref{maincorollary_sieve}}

We will adapt the proof of \cite[Theorem B]{HLT21}. Consider the sets
$$A_{\ell_\varphi^{*}} = \Big\{y \in Y \colon \liminf\limits_{n\,\to\,+\infty}\,\frac{1}{n}\,\sum\limits_{j=0}^{n-1}\, \varphi (T^{j} y) = \ell_\varphi^{*} \Big\}$$
$$A_{L_\varphi^{*}} = \{y \in Y \colon \limsup\limits_{n\,\to\,+\infty}\,\frac{1}{n}\,\sum\limits_{j=0}^{n-1} \,\varphi (T^{j} y) =
L_\varphi^{*} \Big\}$$
and, for every $\alpha > \ell_\varphi^{*}$,
$$B_\alpha = \bigcap_{N = 1}^{+\infty} \bigcup_{n = N}^{+\infty}\,\Big\{y \in Y \colon \frac1n\,\sum_{i=0}^{n-1}\,\varphi(T^i y) < \alpha\Big\}.$$ As $\alpha > \ell_\varphi^{*}$, there exists $y_0\in \Trans(Y,T)$ such that $y_0 \in B_\alpha$. Consequently, for every $N \in \mathbb{N}$ the set $\bigcup_{n=N}^{+\infty}\Big\{y \in Y \colon \frac1n\,\sum_{i=0}^{n-1}\,\varphi(T^i y) < \alpha\}$ is an open, dense subset of $Y$. Therefore, $B_\alpha$ is Baire generic in $Y$ for any $\alpha > \ell_\varphi^{*}$.

Take a convergent sequence $\{\alpha_n\}_{n=1}^{+\infty}$ in $\mathbb{R}$ such that $\lim_{n\,\to\,+\infty}\alpha_n=\ell_\varphi^{*}$ and $\alpha_{n} >\ell_\varphi^{*}$ for every $n \in \mathbb{N}$. Then $\bigcap_{n=1}^{+\infty} B_{\alpha_{n}} \subseteq A_{\ell_\varphi^{*}}$.  This implies that $A_{\ell_\varphi^{*}}$ is Baire generic in $Y$. We deduce similarly that $A_{L_\varphi^{*}}$ is Baire generic in $Y$. Thus $A_{\ell_\varphi^{*}} \cap A_{L_\varphi^{*}} = I_{\varphi}[\ell_\varphi^{*},L_\varphi^{*}] $ is Baire generic in $Y$.

We are now going to show that either $I_{\varphi}[\ell_\varphi^{*},L_\varphi^{*}] \subseteq {\mathcal I}(T,\varphi)$ or $I_{\varphi}[\ell_\varphi^{*},L_\varphi^{*}] \subseteq   Y \setminus {\mathcal I}(T,\varphi)$. For any $y \in I_{\varphi}[\ell_\varphi^{*},L_\varphi^{*}]$, we have
$$\liminf\limits_{n \,\to\,+\infty}\, \frac{1}{n}\,\sum\limits_{j=0}^{n-1} \,\varphi(T^{j}(y)) =  \ell_\varphi^{*} \leqslant L_\varphi^{*}=   \limsup\limits_{n \,\to\,+\infty} \,\frac{1}{n}\,\sum\limits_{j=0}^{n-1} \,\varphi(T^{j}(y)).$$
Therefore, if $\ell_\varphi^{*} < L_\varphi^{*}$, then $I_{\varphi}[\ell_\varphi^{*},L_\varphi^{*}] \subseteq   {\mathcal I}(T,\varphi)$ and
${\mathcal I}(T,\varphi)$ is Baire generic in $Y$. Otherwise, if $\ell_\varphi^{*} = L_\varphi^{*}$, then $I_{\varphi}[\ell_\varphi^{*},L_\varphi^{*}] \subseteq   Y \setminus {\mathcal I}(T,\varphi)$. These inclusions imply that ${\mathcal I}(T,\varphi)$ is either Baire generic or  meagre, and characterize the case of a meagre ${\mathcal I}(T,\varphi)$. 
\hfill $\square$

\begin{remark} According to \cite[Proposition 3.11]{P21}, in the context of Baire ergodic maps $T$, for any Baire measurable function
$\varphi: Y \to \mathbb{R}$ there exist a Baire generic subset $\mathfrak{R}$ of $Y$ and constants $c_{-}^{\varphi}$ and $c_{+}^{\varphi}$ such that
$$\ell_\varphi(y) \,= \,c_{-}^{\varphi} \quad \quad  \text{ and } \quad \quad L_\varphi(y) \, = \, c_{+}^{\varphi} \quad \quad \forall\, y
\in \mathfrak{R}.$$
\end{remark}


\section{Proof of Corollary~\ref{maincorollary_equivalence}}

\noindent $(i) \Rightarrow (ii)$  By Remark \ref{remark.CV_mod} there exists $\eta > 0$ such that $\mathcal{D} = \{y \in Y: L_{\varphi}(y) - \ell_{\varphi}(y) \geq \eta \}$ is a Baire generic, and so dense, subset of $Y$.
Note that, for any $z$ and $w$ in $\mathcal{D}$, one has either $|L_{\varphi}(z) - \ell_{\varphi}(w)| > \frac{\eta}{2}$ or $|L_{\varphi}(w) - \ell_{\varphi}(z)| > \frac{\eta}{2}$.
This implies that $Y$ is $(T,\varphi)$-sensitive, and so, by  Theorem~\ref{CV_mod}, the set ${\mathcal I}(T,\varphi)$ is Baire generic.

\smallskip

\noindent $(ii) \Rightarrow (iii)$ Apply item ($ii$) of Corollary \ref{maincorollary_auxiliar} with $\mathfrak{F}= \{\varphi\}$.

\smallskip

\noindent $(iii) \Rightarrow (i)$ Assume that $\Trans (Y,T) \cap {\mathcal I}(T,\varphi) \neq \emptyset$. Then,
there are $y_0, \, y_1 \in \Trans (Y,T)$ (possibly equal) and convergent subsequences $(\varphi_{n_{k}}(y_0))_{k \,\in\, \mathbb{N}}$ and $(\varphi_{m_{k}}(y_1))_{k \,\in \,\mathbb{N}}$ whose limits are distinct.
Let $A$ be the (dense) orbit of $y_0$, $B$ the (dense) orbit of $y_1$ and $\varepsilon$ equal to half the distance between the limits of the convergent subsequences $(\varphi_{n_{k}}(y_0))_{k \,\in\, \mathbb{N}}$ and $(\varphi_{m_{k}}(y_1))_{k \,\in \,\mathbb{N}}$. This is the data needed to confirm that $Y$ is $(T,\varphi)$-sensitive.
 \hfill $\square$


\section{Proof of Corollary~\ref{maincorollary_compact}}

\noindent (a) Suppose that $X$ is a compact metric space that has a dense orbit. From \cite[Proposition 1 and Proposition 2]{Win10} or  \cite[Proposition 2.1 and Proposition 2.2]{HLT21}, we know that $X_{\Delta}$
is Baire generic in $X$.

We proceed by showing that $X_{\Delta} \subseteq \bigcap_{\varphi \,\in \,C(X,\mathbb{R})} \widehat{I_{\varphi}[\ell_\varphi^{*},L_\varphi^{*}]} $. Take $x$ in $X_{\Delta}$ and $\varphi \in C(X,\mathbb{R})$. We claim that $\ell_\varphi(x) \leqslant  \ell_\varphi^{*}$ and $L_\varphi^{*} \leqslant  L_\varphi(x)$. Assume, on the contrary, that $\ell_\varphi(x) > \ell_\varphi^{*}$. Then there exists a transitive point $x_0$ such that $\ell_\varphi(x) > \ell_\varphi(x_0)$. Therefore, there is a subsequence $(n_{k})_{k \,\in \,\mathbb{N}}$ such that
$$\ell_\varphi(x) > \ell_\varphi(x_0) = \lim\limits_{k \,\to\, +\infty}\, \frac{1}{n_{k}}\, \sum\limits_{j=0}^{n_{k}-1}\, \varphi (T^{j}(x_0)).$$
Reducing to a subsequence if necessary, we find $\mu \in V_{T}(x_0)$ such that $\frac{1}{n_{k}} \sum_{j=0}^{n_{k}-1} \delta_{T^{j}(y)}$ converges to $\mu$ in the weak$^*$-topology as $k$ goes to $+\infty$, and so $\ell_\varphi(x) > \ell_\varphi(x_0) = \int \varphi \,d\mu$. Since, by hypothesis, $\mu$ belongs to $V_{T}(x)$, there exists an infinite sequence $(n_{q})_{q \, \in \,\mathbb{N}}$ such that $\frac{1}{n_{q}} \sum_{j=0}^{n_{q}-1} \delta_{T^{j}(x)}$ converges to $\mu$ as $q$ goes to $+\infty$. This yields to
$$\int \varphi \,d\mu = \lim\limits_{q\, \to\,+\infty}\, \frac{1}{n_{q}}\, \sum\limits_{j=0}^{n_{q}-1} \,\varphi (T^{j}(x)) \geqslant
\ell_\varphi(x) >\ell_\varphi(x_0) = \int \varphi \,d\mu$$
which is a contradiction. Therefore, we must have $\ell_\varphi(x) \leqslant  \ell_\varphi^{*}$.

We prove similarly that $L_\varphi^{*} \leqslant  L_\varphi(x)$ for every $x \in X_{\Delta}$. Thus $x \in \widehat{I_{\varphi}[\ell_\varphi^{*},L_\varphi^{*}]}$.

\smallskip

\noindent (b) We start establishing the following auxiliary result.

\begin{lemma}\label{lemma_Baire generic_empty}
Let $(X, d)$ be a compact  metric space,  $T : X \to X$ be a continuous map with a dense orbit. If $\mathcal{R}(X,T) = \emptyset$, then
$\bigcap_{\varphi\, \in\, C(X, \mathbb{R})} \,X \setminus {\mathcal I}(T,\varphi)$ is Baire generic in $X$.
\end{lemma}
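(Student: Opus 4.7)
The plan is to reduce the uncountable intersection $\bigcap_{\varphi \in C(X,\mathbb R)}(X\setminus \mathcal I(T,\varphi))$ to a countable one by exploiting the separability of $C(X,\mathbb R)$, and then invoke the dichotomy from Corollary~\ref{maincorollary_sieve}(i) to see that each member of the countable intersection is Baire generic.

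First, since $X$ is a compact metric space, $C(X,\mathbb R)$ with the supremum norm is separable; fix a countable dense subset $\{\varphi_k\}_{k\,\in\,\mathbb N}\subset C(X,\mathbb R)$. The hypothesis $\mathcal R(X,T)=\emptyset$ says that no ${\mathcal I}(T,\varphi)$ is Baire generic. But, since $T$ has a dense orbit, Corollary~\ref{maincorollary_sieve}(i) gives the dichotomy that ${\mathcal I}(T,\varphi)$ is either Baire generic or meagre. Therefore ${\mathcal I}(T,\varphi)$ is meagre, equivalently $X\setminus {\mathcal I}(T,\varphi)$ is Baire generic, for every $\varphi\in C(X,\mathbb R)$, and in particular for each $\varphi_k$.

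Next I would establish the inclusion
$$\bigcap_{k\,\in\,\mathbb N}\bigl(X\setminus {\mathcal I}(T,\varphi_k)\bigr)\;\subseteq\;\bigcap_{\varphi\,\in\,C(X,\mathbb R)}\bigl(X\setminus {\mathcal I}(T,\varphi)\bigr).$$
Fix $x$ in the left-hand side and an arbitrary $\varphi\in C(X,\mathbb R)$. Given $\varepsilon>0$, choose $k$ with $\|\varphi-\varphi_k\|_\infty<\varepsilon/3$. Then for all $n,m\in\mathbb N$,
$$\Bigl|\tfrac{1}{n}\!\sum_{j=0}^{n-1}\varphi(T^j x)-\tfrac{1}{n}\!\sum_{j=0}^{n-1}\varphi_k(T^j x)\Bigr|\,<\,\tfrac{\varepsilon}{3},$$
and similarly for the $m$-average. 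Since the Cesàro averages of $\varphi_k$ at $x$ converge, they form a Cauchy sequence, and a triangle inequality then shows that the Cesàro averages of $\varphi$ at $x$ are Cauchy too. Hence $x\notin {\mathcal I}(T,\varphi)$, proving the inclusion.

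To conclude, $X$ is a complete metric space (hence Baire), so a countable intersection of Baire generic sets is Baire generic; thus the left-hand side above is Baire generic, and so is its superset $\bigcap_{\varphi\,\in\,C(X,\mathbb R)}(X\setminus {\mathcal I}(T,\varphi))$. The only mildly delicate point is the $\varepsilon/3$ density argument, but it is routine, so I expect no real obstacle beyond invoking Corollary~\ref{maincorollary_sieve}(i) correctly.
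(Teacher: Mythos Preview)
Your proposal is correct and follows essentially the same approach as the paper's proof: both use Corollary~\ref{maincorollary_sieve}(i) to deduce that each ${\mathcal I}(T,\varphi)$ is meagre, pick a countable dense subset of $C(X,\mathbb R)$ to reduce the intersection, and then apply the same $\varepsilon/3$ triangle-inequality argument to show the countable intersection is contained in the full one.
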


\begin{proof}
Since $\mathcal{R}(X,T) = \emptyset$, from Corollary~\ref{maincorollary_sieve} we know that ${\mathcal I}(T,\varphi)$ is meagre for every $\varphi\, \in\, C(X, \mathbb{R})$. Hence $X \setminus \mathcal{I}( T,\varphi)$ is Baire generic for all $\varphi \in C(X, \mathbb{R})$. Let $S$ be a countable, dense subset of  $C(X, \mathbb{R})$. Then $\bigcap_{\psi \,\in\, S} X \setminus \mathcal{I}(T,\psi)$ is Baire generic. We are left to prove that
$$\bigcap\limits_{\psi \,\in \,S} \,X \setminus \mathcal{I}(T,\psi) \,\subseteq\,  \bigcap_{\varphi \,\in \,C(X, \mathbb{R})} \,X \setminus {\mathcal I}(T,\varphi).$$

Take $x \in \bigcap_{\psi \,\in \,S} X \setminus {\mathcal I}(T,\psi)$ and $\varphi \in C(X, \mathbb{R})$; we need to show that $x$ belongs to $X \setminus {\mathcal I}(T,\varphi)$. As $S$ is dense, given $\varepsilon >0$ there exists $\psi \in S$ such that $\|\varphi - \psi \|_\infty < \frac{\varepsilon}{3}$. Since $x$ is in $X \setminus \mathcal{I}(T,\psi)$, there is $N \in \mathbb{N}\cup\{0\}$ such that, for every $n, m \geqslant N$, we have $\|\psi_{n}(x)- \psi_{m}(x)\| < \frac{\varepsilon}{3}$. Therefore,
$$\|\varphi_{n}(x) - \varphi_{m}(x)\| \leqslant \|\varphi_{n}(x) - \psi_{n}(x) \| + \|\psi_{n}(x) - \psi_{m}(x) \| + \|\psi_{m}(x) - \varphi_{m}(x) \| < \varepsilon$$
so $x$ is in $X \setminus {\mathcal I}(T,\varphi)$.
\end{proof}

\smallskip

Let us go back to the proof of item (b). We begin by showing that $\bigcup_{\varphi \,\in \,C(X,\mathbb{R})} \mathcal{I}( T,\varphi)$ is either Baire generic or meagre. Suppose that $\bigcup_{\varphi \,\in \,C(X,\mathbb{R})} \mathcal{I}( T,\varphi)$ is not Baire generic, so $\mathcal{R}(X,T)$ is empty. By Lemma~\ref{lemma_Baire generic_empty}, the set $\bigcap_{\varphi \,\in \,C(X, \mathbb{R})} X \setminus {\mathcal I}(T,\varphi)$ is Baire generic in $X$, and so $\bigcup_{\varphi \,\in\, C(X,\mathbb{R})} \mathcal{I}( T,\varphi)$ is a meagre set. Moreover, using compactness of $X$ and Corollary \ref{maincorollary_sieve}, there exists a Borel invariant probability measure $\mu$ such that
$$\Trans(X,T) \,\subset \,\Big\{x \in X: \lim_{n \,\to \,+\infty} \,\frac{1}{n}\, \sum_{j=0}^{n-1}\, \varphi (T^{j}(x)) \,= \,\int \varphi d\mu \quad \quad \forall\, \varphi \in C(X,\mathbb{R})\Big\}.$$
By item (a), $X_{\Delta} = \{x \in X \colon  \mu \in V_{T}(x)\}$ is Baire generic in $X$. This implies that the set
$$X_{\Delta} \cap  \bigcap_{\varphi \,\in \,C(X, \mathbb{R})} X \setminus {\mathcal I}(T,\varphi) = \Big\{x \in X: \lim_{n\, \to \,+\infty}\, \frac{1}{n} \,\sum\limits_{j=0}^{n-1} \varphi (T^{j}(x)) = \int \varphi \,d\mu \quad \forall\, \varphi \in C(X,\mathbb{R})\Big\}$$
is Baire generic in $X$.

\smallskip

\begin{proposition}\label{prop:compact}
Let $(X, d)$ be a compact metric space and $T : X \to X$ be a continuous map with a dense orbit. The following statements are equivalent:
\begin{itemize}
\item[$(i)$] $ \#  \Big(\bigcup\limits_{t\,\in\, \Trans(X,T)}\, V_{T}(t) \Big) > 1$.
\smallskip
\item[$(ii)$] $\mathcal{R}(X,T) \neq \emptyset$.
\smallskip
\item[$(iii)$] $\mathcal{R}(X,T)$ is open and dense in $C(X, \mathbb{R})$.
\smallskip
\item[$(iv)$] $ X_{\Delta} \subseteq \bigcap_{\varphi \,\in \,\mathcal{R}(X,T)} \,{\mathcal I}(T,\varphi)$.
\smallskip
\item[$(v)$] $ \bigcap_{\varphi \,\in \,\mathcal{R}(X,T)} \,{\mathcal I}(T,\varphi)$ is Baire generic.
\smallskip
\item[$(vi)$] $\Trans (X,T) \cap \bigcap_{\varphi \,\in \,\mathcal{R}(X,T)}\, {\mathcal I}(T,\varphi) \neq \emptyset$.
\smallskip
\item[$(vii)$] $ \bigcup_{\varphi \,\in \,C(X,\mathbb{R})} \,{\mathcal I}(T,\varphi)$ is Baire generic.
\end{itemize}
\end{proposition}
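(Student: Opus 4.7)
The plan is to prove the seven-way equivalence through two short implication cycles that pivot on the characterisation $\mathcal{R}(X,T) = \{\varphi \in C(X,\mathbb{R}) : \ell_\varphi^{*} < L_\varphi^{*}\}$ recorded in the remark following Corollary~\ref{maincorollary_equivalence}, together with parts (a), (b) of the present corollary already proved and Lemma~\ref{lemma_Baire generic_empty}. A first cycle will establish (i) $\Leftrightarrow$ (ii) $\Leftrightarrow$ (iii) $\Leftrightarrow$ (vii), and a second will hook (iv), (v) and (vi) onto this cluster via the Baire-generic set $X_\Delta$ from item (a).

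For the first cycle, to prove (i) $\Rightarrow$ (iii) I would fix distinct $\mu_1, \mu_2 \in \bigcup_{t \in \Trans(X,T)} V_T(t)$, say $\mu_i \in V_T(t_i)$ for transitive points $t_1, t_2$, and consider
$$U \,:=\, \Big\{\varphi \in C(X,\mathbb{R}) \colon \textstyle{\int}\varphi\,d\mu_1 \,\neq\, \textstyle{\int}\varphi\,d\mu_2\Big\}.$$
The set $U$ is open by continuity of the evaluation $\varphi \mapsto \int\varphi\,d\mu_i$, and dense by a small perturbation along any fixed continuous separator $\varphi_0$ with $\int \varphi_0\,d\mu_1 \neq \int \varphi_0\,d\mu_2$, whose existence follows from $\mu_1 \neq \mu_2$ via the Riesz representation of the dual of $C(X,\mathbb{R})$. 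For $\varphi \in U$, swapping indices if needed, the sandwich $\ell_\varphi^{*} \leq \ell_\varphi(t_1) \leq \int\varphi\,d\mu_1 < \int\varphi\,d\mu_2 \leq L_\varphi(t_2) \leq L_\varphi^{*}$ combined with the characterisation above places $\varphi$ in $\mathcal{R}(X,T)$, so $U \subseteq \mathcal{R}(X,T)$ yields (iii). The steps (iii) $\Rightarrow$ (ii) $\Rightarrow$ (vii) are immediate, (vii) $\Rightarrow$ (ii) is the contrapositive of Lemma~\ref{lemma_Baire generic_empty}, and for (ii) $\Rightarrow$ (i) I would pick $\varphi \in \mathcal{R}(X,T)$, invoke Corollary~\ref{maincorollary_equivalence} to obtain a transitive $t$ with $\ell_\varphi(t) < L_\varphi(t)$, and use compactness of $\mathcal{P}(X)$ to realise these two values as $\varphi$-integrals against distinct measures in $V_T(t)$.

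For the second cycle, (i) $\Rightarrow$ (iv) is immediate: given $\varphi \in \mathcal{R}(X,T)$ and $x \in X_\Delta$, part (a) gives $\ell_\varphi(x) \leq \ell_\varphi^{*} < L_\varphi^{*} \leq L_\varphi(x)$, so $x \in \mathcal{I}(T,\varphi)$. Since $X_\Delta$ is Baire generic by (a), (iv) implies (v). For (v) $\Rightarrow$ (vi) one may assume $X$ has no isolated points, because a dense orbit in a compact metric space with an isolated point forces $X$ to reduce to a single finite periodic orbit (for which all seven conditions are easily verified directly); then $\Trans(X,T)$ is a dense $G_\delta$ and the intersection of two Baire-generic sets is non-empty. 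Finally (vi) $\Rightarrow$ (ii) follows because a non-empty intersection forces the indexing family $\mathcal{R}(X,T)$ to be non-empty, and any element of it witnesses (ii).

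The principal technical delicacy is the vacuous-case issue with the indexed intersection in (iv)--(vi): were $\mathcal{R}(X,T)$ empty, $\bigcap_{\varphi \in \mathcal{R}(X,T)} \mathcal{I}(T,\varphi)$ would equal $X$ under the usual set-theoretic convention, rendering those three conditions trivially true and breaking the equivalence with (i)--(iii) and (vii). The argument above proceeds under the natural reading that (iv)--(vi) implicitly assert the non-emptiness of the indexing family; equivalently, one checks that in the degenerate scenario $\mathcal{R}(X,T) = \emptyset$ (which coincides with $X$ being a single finite periodic orbit) all the meaningful forms of (i)--(vii) fail together. A secondary point requiring care is the density step for $U$ in (i) $\Rightarrow$ (iii), which relies on the Riesz representation of the dual of $C(X,\mathbb{R})$ to exhibit a continuous function separating two distinct Borel probability measures.
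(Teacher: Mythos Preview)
Your overall route closely parallels the paper's: it too proves $(i)\Leftrightarrow(ii)$ directly and then runs the chain $(ii)\Rightarrow(iii)\Rightarrow(iv)\Rightarrow(v)\Rightarrow(vi)\Rightarrow(vii)\Rightarrow(ii)$, invoking the characterisation $\mathcal R(X,T)=\{\varphi:\ell_\varphi^*<L_\varphi^*\}$, the set $X_\Delta$ from item~(a), and Lemma~\ref{lemma_Baire generic_empty} at essentially the same places you do.

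There is, however, a genuine error in your $(v)\Rightarrow(vi)$. The assertion that a dense orbit in a compact metric space with an isolated point forces $X$ to be a single finite periodic orbit is false: the paper's own Example~\ref{ex:exemplo2}, namely $X=\{0\}\cup\{1/n:n\in\mathbb N\}$ with $T(0)=0$ and $T(1/n)=1/(n+1)$, is compact, has the dense orbit of the point $1$, has every $1/n$ isolated, and is infinite. Nor does the extra hypothesis $\mathcal R(X,T)\neq\emptyset$ rescue the claim: glue an isolated point $q$ onto the full shift $\Sigma=\{0,1\}^{\mathbb N}$ by setting $T(q)$ equal to a point of $\Sigma$ that is simultaneously transitive and $\varphi$-irregular for some $\varphi$; the resulting compact space has an isolated point, a dense orbit (that of $q$), $\mathcal R(X,T)\neq\emptyset$, and is not a finite periodic orbit. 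The paper handles $(v)\Rightarrow(vi)$ by invoking Corollary~\ref{maincorollary_auxiliar}(ii), whose proof already contains the correct case split on isolated points; you should do the same rather than attempt an ad hoc reduction.

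A smaller gap: in your $(i)\Rightarrow(iii)$, exhibiting an open dense $U\subseteq\mathcal R(X,T)$ shows $\mathcal R(X,T)$ is dense, but not that it is open. Openness does follow easily from the characterisation you cite (the maps $\varphi\mapsto\ell_\varphi^*$ and $\varphi\mapsto L_\varphi^*$ are $1$-Lipschitz in $\|\cdot\|_\infty$), but this should be said. The paper instead proves $(ii)\Rightarrow(iii)$ by writing $\mathcal R(X,T)=\bigcup_{x\in\Trans(X,T)}\mathcal U_x$ with $\mathcal U_x=\{\varphi:x\in\mathcal I(T,\varphi)\}$, each $\mathcal U_x$ open and (when non-empty) dense by \cite[Lemma~3.2]{HLT21}; openness of a union of open sets is then automatic.
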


\begin{proof}

We will prove that $(i) \Leftrightarrow (ii)$,  $(ii) \Rightarrow \cdots \Rightarrow (vii) $ and $(vii) \Rightarrow (ii)$.

\smallskip

\noindent $(i) \Rightarrow (ii)$ Suppose that $\# \left(\cup_{x\,\in\, \Trans(X,T)} V_{T}(x)\right) > 1$. Then there exist two distinct Borel probability measures $\mu$, $\nu$ in $\cup_{x\,\in\, \Trans(X,T)} V_{T}(x)$ and $\varphi \in C(X,\mathbb R)$ such that $\int \varphi \,d\mu \neq \int \varphi \,d\nu$. This implies that $\{\mu, \nu\} \subseteq V_{T}(x)$ for all $x \in X_{\Delta}$, and so $X_{\Delta} \subseteq {\mathcal I}(T,\varphi)$. Therefore, ${\mathcal I}(T,\varphi)$ is Baire generic since, by item (a), the set $X_{\Delta}$ is Baire generic.

\smallskip

\noindent $(ii) \Rightarrow (i)$ Suppose that that there exists $\varphi \in C(X,\mathbb{R})$ such that ${\mathcal I}(T,\varphi)$ is Baire generic. By Corollary~\ref{maincorollary_equivalence}, one has $\Trans (X,T) \cap {\mathcal I}(T,\varphi) \neq \emptyset$. This implies that $\ell_\varphi(y) < L_\varphi(y)$, and so $\#V_{T}(y) > 1$.

\smallskip

\noindent $(ii) \Rightarrow (iii)$ For every $x \in X$, denote by $\mathcal{U}_{x}$ the set $\{\varphi \in C(X,\mathbb{R}): x \in {\mathcal I}(T,\varphi)\}$. If $\mathcal{U}_{x} \neq \emptyset$, then $\mathcal{U}_{x}$ is an open dense subset of $C(X,\mathbb{R})$ (see \cite[Lemma 3.2]{HLT21}) and, by Corollary~\ref{maincorollary_equivalence}, $\mathcal{R}(X,T) = \bigcup_{x \,\in\, \Trans(X,T)} \mathcal{U}_{x}$. Therefore, if $\mathcal{R}(X,T) \neq \emptyset$, then $\mathcal{R}(X,T)$ is open and dense in $C(X,\mathbb{R})$.

\smallskip

\noindent $(iii) \Rightarrow (iv)$ Suppose that $\mathcal{R}(X,T)$ is open and dense in $C(X, \mathbb{R})$. In particular, $\mathcal{R}(X,T)$ is not empty. From Corollary~\ref{maincorollary_equivalence},  $\mathcal{R}(X,T) = \{\varphi \in C(X,\mathbb{R})\colon \ell_\varphi^{*} < L_\varphi^{*}\}$. Using item $(a)$, we know that $X_{\Delta} \subseteq \bigcap_{\varphi \,\in \,C(X,\mathbb{R})} \widehat{I_{\varphi}[\ell_\varphi^{*},L_\varphi^{*}]}$.
Moreover,
$$X_{\Delta} \subseteq  \bigcap\limits_{\varphi \,\in \,C(X,\mathbb{R})} \widehat{I_{\varphi}[\ell_\varphi^{*},L_\varphi^{*}]} =  \left(\bigcap\limits_{\ell_\varphi^{*} < L_\varphi^{*}} \widehat{I_{\varphi}[\ell_\varphi^{*},L_\varphi^{*}]} \right) \cap \left(\bigcap\limits_{\ell_\varphi^{*} = L_\varphi^{*}} \widehat{I_{\varphi}[\ell_\varphi^{*},L_\varphi^{*}]}\right)$$
and
$$X_{\Delta} \subseteq  \bigcap\limits_{\ell_\varphi^{*} < L_\varphi^{*}} \widehat{I_{\varphi}[\ell_\varphi^{*},L_\varphi^{*}]}.$$
Consequently,
$$X_{\Delta} \subseteq  \bigcap\limits_{\ell_\varphi^{*} < L_\varphi^{*}} {\mathcal I}(T,\varphi) =  \bigcap\limits_{\varphi \,\in \,\mathcal{R}(X,T)} \mathcal{I}(T,\varphi).$$

\smallskip

\noindent $(iv) \Rightarrow (v)$ Suppose that $X_{\Delta} \subseteq \bigcap_{\varphi \,\in \,\mathcal{R}(X,T) } \mathcal{I}(T,\varphi)$. By item $(a)$, $X_{\Delta} $ is Baire generic; hence $\bigcap_{\varphi \,\in\, \mathcal{R}(X,T)} \mathcal{I}( T,\varphi)$ is Baire generic in $X$.

\smallskip

\noindent $(v) \Rightarrow (vi)$ Suppose that $\bigcap_{\varphi \,\in \,\mathcal{R}(X,T)} {\mathcal I}(T,\varphi)$ is Baire generic. By item $(ii)$ of Corollary~\ref{maincorollary_auxiliar}, we know that $\Trans (X,T) \cap  \bigcap_{\varphi \,\in \,\mathcal{R}(X,T)} \mathcal{I}(T,\varphi) \neq \emptyset$.

\smallskip

\noindent $(vi) \Rightarrow (vii)$ This is clear from Corollary~\ref{maincorollary_equivalence}.

\smallskip

\noindent $(vii) \Rightarrow (ii)$ Suppose that $\bigcup_{\varphi \,\in \,C(X,\mathbb{R})} \mathcal{I}( T,\varphi)$ is Baire generic. If $\mathcal{R}(X,T) = \emptyset$, using Lemma~\ref{lemma_Baire generic_empty} one deduces that    $\bigcap_{\varphi\, \in\, C(X, \mathbb{R})} X \setminus {\mathcal I}(T,\varphi)$ is Baire generic. Then
$$\emptyset \, = \, \Big(\bigcup\limits_{\varphi \,\in \,C(X,\mathbb{R})} \mathcal{I}(T,\varphi)\Big) \cap \Big(\bigcap\limits_{\varphi\, \in\, C(X, \mathbb{R})} X \setminus {\mathcal I}(T,\varphi) \Big)$$
is Baire generic as well, so it is not empty. This contradiction ensures that $\mathcal{R}(X,T) \neq \emptyset$.

\end{proof}

\noindent $(c)$ Firstly, we note that, by Corollary~\ref{maincorollary_equivalence}, one has

$\mathcal{H}(X,T) = \{\varphi \in C(X,\mathbb{R}) \colon  \,\ell_\varphi^{*} < L_\varphi^{*}\} \,\cup \,\{\varphi \in C(X,\mathbb{R}):  \,\mathcal{I}(T,\varphi) \neq \emptyset \,\text{ and } \,\ell_\varphi^{*} = L_\varphi^{*}\}$

$\mathcal{H}(X,T) = \mathcal{R}(X,T) \,\cup \,\{\varphi \in C(X,\mathbb{R}):  \,{\mathcal I}(T,\varphi) \neq \emptyset \,\text{ and } \,\ell_\varphi^{*} = L_\varphi^{*}\}.$
\smallskip

\noindent Suppose that $\mathcal{H}(X,T)$ is not empty. If $\{\varphi \in C(X,\mathbb{R}): \mathcal{I}(T,\varphi) \neq \emptyset$ and $\ell_\varphi^{*} = L_\varphi^{*} \}$ is empty, then $\mathcal{H}(X,T) = \mathcal{R}(X,T)$ is not empty. Therefore, by
Proposition~\ref{prop:compact}, we conclude that $\bigcap_{\varphi \,\in\, \mathcal{H}(X,T)} \mathcal{I}(T,\varphi)$ is Baire generic.

Assume, otherwise, that $\{\varphi \in C(X,\mathbb{R}):  {\mathcal I}(T,\varphi) \neq \emptyset$ and $\ell_\varphi^{*} = L_\varphi^{*} \}$ is not empty. Then there exists $\varphi \in C(X,\mathbb{R})$ such that $\mathcal{I}(T,\varphi) \neq \emptyset$ and $\ell_\varphi^{*} = L_\varphi^{*}$. Thus $\mathcal{I}(T,\varphi)$ is meagre by Corollary~\ref{maincorollary_sieve}. As $\bigcap_{\varphi\, \in\, \mathcal{H}(X,T)} \mathcal{I}(T,\varphi) \subseteq \mathcal{I}(T,\varphi)$, the set $\bigcap_{\varphi\, \in\, \mathcal{H}(X,T)} \mathcal{I}(T,\varphi)$ is meagre as well.

This way, we have shown that $\bigcap_{\varphi\, \in\, \mathcal{H}(X,T)} \mathcal{I}(T,\varphi)$ is either Baire generic or meagre. Actually, we have proved more: that the set $\bigcap_{\varphi\, \in\, \mathcal{H}(X,T)} \mathcal{I}(T,\varphi)$ is Baire generic if and only if $\{\varphi \in C(X,\mathbb{R}):  \mathcal{I}(T,\varphi) \neq \emptyset$ and $\ell_\varphi^{*} = L_\varphi^{*} \}$ is empty. \hfill $\square$


\section{Proof of Theorem~\ref{sensitive}}

Let $(X,d)$ be a compact metric space and $\varphi \in C(X, \mathbb R)$ such that $X$ is $(T,\varphi)$-sensitive. Recall that this means that there exist dense sets $A, B\subset X$ and $\varepsilon > 0$ such that, for any pair $(a,b) \in A \times B$ there is $(r_{a},\,r_{b}) \in \{\varphi_{n}(a):  n \in \mathbb{N}\}' \times \{\varphi_{n}(b):   n \in \mathbb{N}\}'$ satisfying $|r_{a} -r_{b}| > \varepsilon$. Using
the uniform continuity of $\varphi$, one can choose $\delta > 0$ such that $|\varphi(z) - \varphi(w)| < \frac{\varepsilon}{2}$ for every $z,w \in X$ with $d(z,w) < \delta$.

Assume, by contradiction, that ${\mathcal I}(T,\varphi)$ has empty interior and $X$ has no sensitivity to initial conditions. The latter implies that for each $\theta > 0$ there exist $x_{\theta} \in X$ and an open neighborhood $U_{x_\theta}$ of $x_{\theta}$ such that $d(T^{n}(x_{\theta}), \,T^{n}(x)) \leqslant \theta$ for every $x \in U_{x_\theta}$ and $n \in \mathbb{N} \cup \{0\}$. Choose $\theta = \frac{\delta}{3}$ and take $x_\theta$ and $U_{x_\theta}$ as above.

As ${\mathcal I}(T,\varphi)$ has empty interior, the set $X \setminus {\mathcal I}(T,\varphi)$ is dense in $X$. Therefore, there are $a \in A \cap U_{x_\theta}$, $b \in B \cap U_{x_\theta}$ and a $\varphi$-regular point $c \in U_{x_\theta}$
satisfying
\begin{center}
$|\varphi(T^{n}(a)) - \varphi(T^{n}(c))| < \frac{\varepsilon}{2}\quad$ and $\quad |\varphi(T^{m}(b)) - \varphi(T^{m}(c))| < \frac{\xi}{2} \quad \quad \forall\, n,m \in \mathbb{N} \cup \{0\}.$
\end{center}
Consequently, for every $n,m \in \mathbb{N} \cup \{0\}$, one has
\begin{center}
$\left|\frac{1}{n} \sum\limits_{j=0}^{n-1} \varphi(T^{j}(a))  -   \frac{1}{n} \sum\limits_{j=0}^{n-1}\varphi(T^{j}(c)) \right| < \frac{\varepsilon}{2}\quad $ and $\quad \left|\frac{1}{m} \sum\limits_{j=0}^{m-1} \varphi(T^{j}(b)) -  \frac{1}{m} \sum\limits_{j=0}^{m-1} \varphi(T^{j}(c)) \right|  < \frac{\varepsilon}{2}.$
\end{center}
Taking the limit as $n$ goes to $+\infty$ in the first inequality along a subsequence $(n_k)_{k \, \in \, \mathbb{N}}$ converging to $r_{a}$ and taking the limit as $m$ tends to $+\infty$ in the second inequality along a subsequence $(m_k)_{k \, \in \, \mathbb{N}}$ convergent to $r_{b}$, and using that $c$ is a $\varphi$-regular point, we obtain
$$\left|r_{a} - \lim\limits_{k \,\to\, +\infty}\,\frac{1}{n_k} \,\sum\limits_{j=0}^{n_k-1}\,\varphi(T^{j}(c)) \right| \leqslant \frac{\varepsilon}{2} \quad\text{and}\quad
\left|r_{b} - \lim\limits_{k \,\to \,\infty}\,\frac{1}{m_k} \,\sum\limits_{j=0}^{m_k-1}\,\varphi(T^{j}(c)) \right| \leqslant \frac{\varepsilon}{2}$$
and so $\left|r_{a} - r_{b} \right| \leqslant \varepsilon$.
We have reached a contradiction, thus proving that either $X$ has sensitivity on initial conditions or the irregular set ${\mathcal I}(T,\varphi)$ has non-empty interior, as claimed.

\smallskip

Item (a) is a direct consequence of the previous statement and the fact that periodic points of $T$ are $\varphi$-regular. Item (b) follows from item (a). 
\hfill $\square$


\section{Proof of Corollary~\ref{cor:minimalFV}}

Let $T: X\to X$ be a strongly transitive continuous endomorphism of a compact metric space $X$. Given $\varphi\in C(X,\mathbb R)$ satisfying
$\mathcal I(T,\varphi)\neq \emptyset$, let us show that $\mathcal I(T,\varphi)$ is a Baire generic subset of $X$. Fix $x_0 \in \mathcal I(T,\varphi)$ and let $\vep:= L_\varphi(x_0)-\ell_\varphi(x_0)>0.$ The strong transitivity assumption ensures that, for every non-empty open
subset $U$ of $X$, there is $N \in \mathbb{N} \cup \{0\}$ such that $x_0 \in T^N(U)$. Thus, the pre-orbit $\mathcal O_T^-(x_0):= \,\big\{x \in X \colon \,T^n(x)=x_0\, \text{ for some $n \in \mathbb{N}$} \big\}$
of $x_0$ is dense in $X$. Moreover,
$$\sup_{r,\,s\,\in \,W_{\varphi}(x)} \,\,|r-s| > \varepsilon\, \qquad \forall\, x \in \mathcal O_T^-(x_0).$$
This proves that $X$ is $(T,\varphi)$-sensitive and so, by Theorem~\ref{CV_mod}, the set $\mathcal I(T,\varphi)$ is Baire generic in $X$.
\hfill $\square$


\section{Examples}\label{sec:examples}

In this section we discuss the hypothesis, and derive some consequences, of the main results. The first example illustrates the Definition~\ref{defTphisensitive}.

\begin{example}\label{ex:exemplo1}

Consider the shift space $X = \{0,1\}^\mathbb{N}$ endowed with the metric defined by
$$d((a_n)_{n\, \in \,\mathbb{N}}, \,(b_n)_{n\, \in \,\mathbb{N}}) \,=\, \sum_{n=1}^{+\infty} \,\frac{|a_n - b_n|}{2^n}$$
and take the shift map $\sigma: X \to X$ given by $\sigma(\, (a_n)_{n \, \in \mathbb{N}}\,)=  (a_{n+1})_{n \, \in \mathbb{N}}$. Then the sets
\begin{eqnarray*}
A &=& \big\{(a_n)_{n\, \in \,\mathbb{N}} \,|\, \,\exists\, N \in \,\mathbb{N} \colon \,a_n = 0 \quad \forall\, n \geqslant N\big\} \\
B &=& \big\{(a_n)_{n\, \in \,\mathbb{N}} \,|\, \,\exists\, N \in \,\mathbb{N} \colon \,a_n = 1 \quad \forall\, n \geqslant N\big\}
\end{eqnarray*}
are the stable sets of the fixed points $\overline{0}$ and $\overline{1}$, and are dense subsets of $X$. Besides, if $\varphi \in C(X, \mathbb{R})$, then for every $a \in A$ and $b \in B$ one has
$$\lim_{n \, \to \, +\infty}\, \varphi(\sigma^n(a)) = \varphi(\overline{0}) \quad \quad \text{and} \quad \quad \lim_{n \, \to \, +\infty}\, \varphi(\sigma^n(a)) = \varphi(\overline{1}).$$
Therefore,
$$\lim_{n \, \to \, +\infty}\, \frac{1}{n}\,\sum_{j=1}^{n-1}\,\varphi(\sigma^j(a)) = \varphi(\overline{0}) \quad \quad \text{and} \quad \quad \lim_{n \, \to \, +\infty}\, \frac{1}{n}\,\sum_{j=1}^{n-1}\,\varphi(\sigma^j(b)) = \varphi(\overline{1}).$$
And so, if $\varphi(\overline{0}) \neq \varphi(\overline{1})$ and one chooses $\varepsilon = |\varphi(\overline{0}) - \varphi(\overline{1})|/2$, then we conclude that for every $(a,b) \in A \times B$ there are $r_a \in W_\varphi(a)$ and $r_b \in W_\varphi(b)$ such that $|r_a - r_b| > \varepsilon$. Consequently, $X$ is $(\sigma, \varphi)$-sensitive with respect to any $\varphi \in C(X, \mathbb{R})$ satisfying $\varphi(\overline{0}) \neq \varphi(\overline{1})$. So,  by Theorem~\ref{CV_mod}, for every such maps $\varphi$ the set ${\mathcal I}(T,\varphi)$ is Baire generic in $X$.

\begin{remark}
We note that a similar reasoning shows that, if $T: Y \to Y$ is a continuous map acting on a Baire metric space such that $T$ has two periodic points with dense pre-orbits, then there exists $\varphi \in C^b(Y, \mathbb{R})$ whose set ${\mathcal I}(T,\varphi)$ is Baire generic in $Y$.
\end{remark}

\end{example}

The second example helps to clarify the requirements in Theorem~\ref{CV_mod} and Corollary~\ref{maincorollary_auxiliar}, besides calling our attention to the difference between transitivity and the existence of a dense orbit.

\begin{example}\label{ex:exemplo2}
Consider the space $X= \{1/n : n\in \mathbb N\} \cup \{0\}$ endowed with the Euclidean metric. Let $T: X \to X$ be the continuous map given by $T(0)=0$ and $T(1/n) = 1/(n+1)$ for every $n \in \mathbb{N}$. Notice that $X$ has isolated points and $T$ has a dense orbit, though
$\Trans(X,T)=\{1\}$. However, $T$ is not transitive. Besides, ${\mathcal I}(T,\varphi) = \emptyset$ for every $\varphi \in C(X, \mathbb R)$.
\end{example}

The next example shows that the existence of a discontinuous first integral $L_\varphi$ with two dense level sets for a continuous map $T: Y \to Y$ acting on a metric space $Y$ may be indeed stronger than requiring $(T,\varphi)$-sensitivity.

\begin{example}\label{ex:Bowen}
Let $(\Psi_t)_{t\,\in\,\mathbb R}$ be a smooth Morse-Smale flow on $\mathbb S^2$ with hyperbolic singularities $\{\sigma_1, \sigma_2, \sigma_3, \sigma_4, S, N\}$ and displaying an attracting union of four separatrices, as illustrated in Figure~\ref{figure1}. More precisely,  there exist  four separatrices $\gamma_{1}, \gamma_{2}, \gamma_{3}$ and $\gamma_4$ associated to hyperbolic singularities $\sigma_1, \sigma_2$ of saddle type, while all the other singularities are repellers.
\begin{figure}[!htb]
\begin{center}
\includegraphics[scale=0.27]{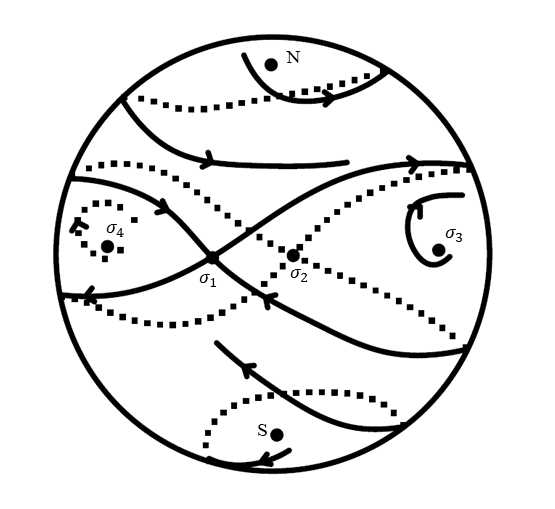}
\caption{Flow with historic behavior.}
\label{figure1}
\end{center}
\end{figure}
Let $\varphi: \mathbb S^2 \to \mathbb R$ be a continuous observable satisfying $\varphi(x) \in [0,1]$ for every $x \in \mathbb S^2$,
$\varphi(\sigma_1)= 1$ and $\varphi(\sigma_2)=0$. Consider the time-one map $T: Y \to Y$ of the flow $(\Psi_t)_{t\,\in\,\mathbb R}$. As the orbits by $T$ of all the points in $Y := \,\mathbb S^2\setminus (\bigcup_{i=1}^4 \gamma_i)$ accumulate on the closure of the union of the separatrices, it is immediate that the first integral $L_\varphi$, defined by ~\eqref{eq:firstintegral}, is everywhere constant in $Y$: $L_\varphi(y)=1$ for every $y \in Y$. However, as $W_\varphi(y) = [0,1]$ for every $y \in Y$, the space $Y$ is $(T,\varphi)$-sensitive.
\end{example}

In the following example we will address the irregular set in the context of countable Markov shifts. These symbolic systems appear naturally as models for non-uniformly hyperbolic dynamical systems on compact manifolds (see \cite{P11} and references therein), hyperbolic systems with singularities \cite{CWZ}, including Sinai dispersing billiards, and certain classes of piecewise monotone interval maps \cite{Hofbauer}, which encompass the piecewise expanding Lorenz interval maps, just to mention a few.

\begin{example}\label{ex:CMS}
Let $\mathcal{A}$ be a countable set, $\mathbb A = (a_{i,j})_{i,j\,\in \,\mathcal{A}}$ be a matrix of zeroes and ones and $\Sigma_{\mathbb A} \subset \mathcal{A}^{\mathbb N}$ be the subset
$$\Sigma_{\mathbb A} = \big\{(x_n)_{n \, \in \mathbb{N}} \in \mathcal{A}^{\mathbb N} \colon \,a_{{x_n},{x_{n+1}}} = 1  \quad \forall\, n \in \mathbb{N}\big\}.$$
Endow $\Sigma_{\mathbb A}$ with the metric
$$d((x_n)_{n \, \in \mathbb{N}},\,(y_n)_{n \, \in \mathbb{N}}) = \left\{
\begin{array}{ll}
2^{-\min\,\{k \, \in \,\mathbb{N} \colon \,x_k \,\neq\, y_k\}} & \text{ if $\{k \, \in \mathbb{N} \colon \,x_k \neq y_k\} \neq \emptyset$}\\
0 & \text{ otherwise}.
\end{array}
\right.$$
We note that $a_{i,j}=0$ for all but finitely many values of $(i,j) \in \mathcal{A}\times \mathcal{A}$ if and only if $\Sigma_{\mathbb A}$ is a compact metric space. Besides, the metric space $(\Sigma_{\mathbb A},d)$ has a countable basis of the topology, generated by the countably many cylinders, and it is invariant by the shift map $\sigma \colon \mathcal{A}^{\mathbb N} \to \mathcal{A}^{\mathbb N}$.

Observe also that the set of periodic points of $\sigma_{|\Sigma_{\mathbb A}}$ is dense in $\Sigma_{\mathbb A}$. Thus, if $\sigma_{|\Sigma_{\mathbb A}}$ is transitive, then all points have dense pre-orbits and there exists $\varphi \in C^b(\Sigma_{\mathbb A}, \mathbb R)$ such that $\Sigma_{\mathbb A}$ is $(\sigma,\varphi)$-sensitive. In fact, for each $\varphi \in C^b(\Sigma_{\mathbb A},\mathbb R)$ either $\Sigma_{\mathbb A}$ is $(\sigma,\varphi)$-sensitive or $\varphi$ complies with the following rigid condition: there exists $c_\varphi \in \mathbb R$ satisfying
$$\frac1{\pi(p)}\,\sum_{j=0}^{\pi(p)-1} \,\varphi(\sigma^j(p)) = c_\varphi \quad\text{for every periodic point $p$}$$
where $\pi(p)\in \mathbb{N}$ denotes the minimal period of $p$.

\end{example}

Our next example concerns a skew-product admitting two invariant probability measures whose basins of attraction have positive Lebesgue measure and are dense.

\begin{example}\label{ex:Kan}
Consider the annulus $\mathbb A=\mathbb S^1\times [0,1]$ and the map $T: \mathbb A \to \mathbb A$ given by
$$
T(x,t) \,= \, \Big(\, 3x \,(\mbox{mod}~1),\, t+ \frac{t(1-t)}{32} \cos (2\pi x)  \Big) \quad \quad \forall \, (x,t) \in \mathbb S^1 \times [0,1].
$$
In \cite{Kan}, Kan proved that $T$ admits two physical measures, namely $\mu_0 = \mbox{Leb}_{\mathbb S^1} \times \delta_0$ and $\mu_1 = \mbox{Leb}_{\mathbb S^1} \times \delta_1$, 
whose basins of attraction are intermingled, that is, for every non-empty open set $\mathcal U \subset \mathbb A$
$$
\mbox{Leb}_{\mathbb A} \big( \mathcal U \cap B(\mu_1) \big) > 0  \quad \text{ and } \quad \mbox{Leb}_{\mathbb A}\big( \mathcal U \cap B(\mu_2) \big) > 0.
$$
Later, Bonatti, D\'iaz and Viana introduced in \cite{BDV} the concept of Kan-like map 
and proved that any such map robustly admits two physical measures. More recently, Gan and Shi \cite{GS} showed that, in the space of $C^2$ diffeomorphisms of $\mathbb A$ preserving the boundary, every $C^2$ Kan-like map $T_0$ admits a $C^2$-open neighborhood $\mathscr V$ such that the following holds: for each $T \in \mathscr V$ and every non-empty open set $\mathcal U \subset \mathbb A$,
$$\text{the interior of }(\mathbb A) \,\subset \bigcup_{n \, \geqslant \,0} T^n(\mathcal U).$$

Using Theorem~\ref{CV_mod} we conclude that, if $T$ is a Kan-like map and $\varphi \in C(\mathbb A, \mathbb R)$ satisfies $\int \varphi \,d\mu_0 \neq \int \varphi \,d\mu_1$, then $\mathcal I(T,\varphi)$ is a Baire generic subset of $\mathbb A$. More generally, 
the argument that established Corollary~\ref{cor:minimalFV} also ensures that, for any $\varphi \in C(\mathbb A, \mathbb R)$, one has
\begin{itemize}
\item[(a)] either $\mathcal I(T,\varphi) \cap \text{interior }(\mathbb A)=\emptyset$
\smallskip
\item[(b)] or $\mathcal I(T,\varphi)$ is a Baire generic subset of $\mathbb A$.
\end{itemize}
\end{example}

\section{Applications}\label{sec:applications}

As it will become clear in the remainder of this section, Theorem~\ref{CV_mod} has a wide range of applications according to the class of sequences $\Phi=(\varphi_n)_{n\ge 1}$ of observables one considers. Let us provide two such applications, one with a geometric motivation and another in the context of semigroup actions.

\noindent \textbf{Application 1}. The irregular sets of uniformly hyperbolic maps and flows on compact Riemannian manifolds have been extensively studied. One of the reasons for this success is that these dynamical systems can be modeled by symbolic dynamical systems which satisfy the so-called specification property. Irregular sets for continuous maps acting on compact metric spaces and satisfying the specification property have been studied in \cite{LW14}. Many difficulties arise, though, if one drops the compactness assumption. An important example of a hyperbolic dynamical system with non-compact phase space is given by the geodesic flow on a complete connected negatively curved manifold. The next example applies Theorem~\ref{CV_mod} precisely to this setting.

\smallskip

\begin{example}\label{ex:geodesicflows} Let $(M,g)$ be a connected, complete Riemannian manifold. We will discuss the Baire genericity of points with historic behavior in the following two cases:
\begin{itemize}
\item[(I)] $(M,g)$ is negatively curved and the non-wandering set of the geodesic flow $(\Psi^g_t)_{t\,\in\,\mathbb R}$ contains more than two periodic orbits.
\smallskip
\item[(II)] $(M,g)$ has non-positive curvature, its universal curvature has no flat strips and the geodesic flow $(\Psi^g_t)_{t\,\in\,\mathbb R}$ has at least three periodic orbits.
\end{itemize}

Regarding (I), by \cite[Theorem~1.1]{CS10} it is known that the space $\mathcal E$ of Borel ergodic probability measures fully supported on the non-wandering set $\Omega$ is a $G_\delta$-dense subset of all Borel probability measures on $T^1 M$ which are invariant by the geodesic flow. In particular, one can choose a continuous observable $\varphi: T^1 M \to \mathbb R$ such that
$$\inf_{\mu\,\in \,\mathcal E} \,\int \varphi\, d\mu < \sup_{\mu\,\in \,\mathcal E} \,\int \varphi\, d\mu.$$
As the ergodic basins of attraction of the probability measures in $\mathcal E$ are dense in $\Omega$, one concludes that
$$L_\varphi(\cdot) := \,\limsup_{t\,\to\, +\infty} \,\frac1t \,\int_0^t \varphi(\Psi^g_s(\cdot))\, ds$$
is a first integral for the geodesic flow $(\Psi^g_t)_{t\,\in\,\mathbb R}$. Moreover, there are subsets $A, B \subset T^1 M$ which are dense in $\Omega$ and whose $L_\varphi$ value is constant, though the value in $A$ is different from the one in $B$. The existence of $A$ and $B$ means that $\Omega$ is $(T, \Phi)$-sensitive, where $T = \Psi^g_1$ is the time-1 map of the geodesic flow and $\Phi$ is defined by $\Phi = \int_0^1 (\varphi \circ \Psi^g_s) \, ds$. Then Theorem~\ref{CV_mod} ensures that $\mathcal{I}(T,\Phi)$ is Baire generic. Consequently,
$$\mathcal{I}((\Psi^g_t)_{t\,\in\,\mathbb R},\,\varphi) :=  \Big\{y \in \Omega : \, \lim\limits_{t \,\to\,+ \infty} \,\frac1t \,\int_0^t \varphi(\Psi^g_s(y))\, ds \quad\text{does not exist}\Big\}$$
is a Baire generic subset of $\Omega$ as well.

\smallskip

The previous argument can be repeated in the case (II) to yield the conclusion that $I((\Psi^g_t)_{t\,\in\,\mathbb R},\,\varphi)$ is Baire generic since, according to \cite[Theorem~1.1]{CS14}, the space $\mathcal E$ of Borel ergodic probability measures with full support on the non-wandering set $\Omega$ also form a $G_\delta$-dense set.
\end{example}

\smallskip

\noindent \textbf{Application 2}.\label{ex:amenable} Recall that a locally compact group $G$ is \emph{amenable} if for every compact set
$K \subset G$ and $\delta > 0$ there is a compact set $F\subset G$, called $(K,\delta)$-invariant, such that $m(F \Delta K F) < \delta \,m(F)$, where $m$ denotes the counting measure on $G$ if $G$ is discrete, and stands for the Haar measure in $G$ otherwise. We refer the reader to \cite{OW87} for alternative formulations of this concept. A sequence $(F_n)_n$ of compact subsets of $G$ is a \emph{F{\o}lner sequence} if, for every compact $K \subset G$, the set $F_n$ is $(K, \delta)$-invariant for every sufficiently large $n \in \mathbb{N}$ (whose estimate depends on $K$). A F{\o}lner sequence $(F_n)_n$ is \emph{tempered} if there exists $C > 0$ such that
$$m\Big(\bigcup_{1\, \leqslant \, k \,< \,n} \,F_k^{-1} \,F_n \,\Big) \leqslant C \,m\big(\,F_n\,\big) \qquad \forall\, n \in \mathbb{N}.$$
It is known that every F{\o}lner sequence has a tempered subsequence and that every amenable group has a tempered F{\o}lner sequence (cf. \cite[Proposition 1.4]{Lin}). Furthermore, if $G$ is an amenable group acting on a probability space $(X,\mu)$ by measure preserving maps and $(F_n)_n$ is a tempered F{\o}lner sequence, then for every $\varphi\in L^1(\mu)$ the limit
$$\lim_{n\,\to\,+\infty} \,\frac1{m(F_n)} \,\int_{F_n} \varphi(g(x)) \, dm(g)$$
exists for $\mu$-almost every $x \in X$; if, in addition, the $G$-action is ergodic, the previous limit is $\mu$-almost everywhere constant and coincides with $\int \varphi\, d\mu$ (cf. \cite[Theorem~1.2]{Lin}).

\smallskip

Let $(X,d)$ be a compact metric space, $G$ be an amenable group. 
We say that a Borel probability measure $\mu$ on $X$ is \emph{$G$-invariant} (or invariant by the action $\Gamma: G \times X \to X$ of $G$ on $X$) if $\mu({\Gamma_g}^{-1}(A)) = \mu(A)$ for every measurable set $A$ and every $g \in G$. We denote the space of $G$-invariant probability measures by $\mathcal M_G(X)$. A group action of $G$ on $X$ is said to be \emph{uniquely ergodic} if it admits a unique $G$-invariant probability measure.
\smallskip

Consider $\varphi \in C(X,\mathbb R)$ and the sequence $\Phi=(\varphi_n)_{n \, \in \, \mathbb{N}}$ of continuous and bounded maps defined by
\begin{equation}\label{eq:Folner-sequence}
\varphi_n(x) = \,\frac1{m(F_n)} \,\int_{F_n} \,\varphi(g(x)) \, dm(g).
\end{equation}
Clearly, $\|\varphi_n\|_\infty \leqslant \|\varphi\|_\infty$ for every $n \in \mathbb{N}$. Besides, if there are fully supported $G$-invariant and ergodic Borel probability measures $\mu_1 \neq \mu_2$, then $X$ is $\Phi$-sensitive. Thus, Theorem~\ref{CV_mod} ensures that
$$\Big\{x \in X  \colon \,\lim_{n\,\to\,+\infty} \,\frac1{m(F_n)} \,\int_{F_n} \varphi(g(x)) \, dm(g) \, \text{ does not exist} \Big\}$$
is a Baire generic subset of $X$. In what follows, we will show that the previous requirement is satisfied by countable amenable group actions with the specification property.

\begin{definition}\label{def:specification-G}
Following \cite{CL,RTZ}, we say that the continuous
group action $\Gamma: G \times X \to X$ has the \emph{specification property} if for every $\vep>0$ there exists a finite set $K_\vep\subset G$ (depending on $\vep$) such that the following holds: for any finite sample of points $x_0, x_1, x_2, \dots, x_\kappa$ in $X$ and any collection of finite subsets $\hat F_0, \hat F_1, \hat  F_2, \dots, \hat  F_\kappa$ of $G$ satisfying the condition
\begin{equation}\label{eq:spec1}
K_\vep \hat  F_i \cap \hat F_j = \emptyset \qquad \text{for every distinct\; } 0\leqslant i,j \leqslant \kappa
\end{equation}
there exists a point $x\in X$ such that
\begin{equation}\label{eq:spec2}
d(\Gamma_g(x), \Gamma_g(x_i)) < \vep \qquad \text{for every\; } g\in \bigcup_{0 \,\leqslant\, j \,\leqslant \,\kappa} \hat F_j.
\end{equation}
\end{definition}

In rough terms, the previous property asserts that any finite collection of pieces of orbits can be shadowed by a true orbit provided that there is no overlap of the (translated) group elements that parameterize the orbits. We note that, if $G$ is generated by a single map $g$, then Definition~\ref{def:specification-G} coincides with the classical notion of specification for $g$.

As a first consequence of this property one can show that the \emph{basin of attraction} of a $G$-invariant probability measure $\mu$, defined by
$$B(\mu) = \Big\{x \in X \colon \,\frac1{|F_n|}\, \sum_{g\,\in \,F_n} \,\delta_{g(x)} \to \mu \; \; \text{(in the weak$^*$ topology)} \Big\}$$
is topologically large.

\begin{lemma}\label{le:basin}
Let $G$ be a countable amenable group, $(F_n)_{n\, \in \, \mathbb{N}}$ be a tempered F{\o}lner sequence, $(X,d)$ be a compact metric space and
$\Gamma : G \times X \to X$ be a continuous group action satisfying the specification property. If $\mu$ is a $G$-invariant and ergodic probability measure on $X$ then $B(\mu)$ is a dense subset of $X$.
\end{lemma}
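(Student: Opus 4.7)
The plan is to combine the Lindenstrauss pointwise ergodic theorem (recalled in the paragraph preceding the lemma) with an iterated shadowing construction driven by the specification property. First, applying that ergodic theorem to $\mu$, ergodicity of the $G$-action, and a countable dense subfamily of $C(X,\mathbb R)$, one obtains a single point $x_0\in X$ with
$$\lim_{n\to\infty}\tfrac{1}{|F_n|}\sum_{g\in F_n}\varphi(g(x_0))=\int\varphi\,d\mu\qquad\forall\,\varphi\in C(X,\mathbb R),$$
so $x_0\in B(\mu)$. In particular $B(\mu)$ is non-empty, and it remains to approximate an arbitrary $y\in X$ by elements of $B(\mu)$.

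Fix $y\in X$ and $\delta>0$. I would build a Cauchy sequence of points shadowing $x_0$ on ever larger portions of the F{\o}lner sets. Choose a summable sequence $\epsilon_k\downarrow 0$ with $\sum_{k\ge 1}\epsilon_k<\delta$, and let $K_k:=K_{\epsilon_k}\cup K_{\epsilon_k}^{-1}\cup\{e\}$ be the symmetric finite set obtained from the specification property at scale $\epsilon_k$. Using the F{\o}lner condition, pick indices $N_1<N_2<\cdots$ with $F_{N_k}\supseteq K_k$ and $|K_k|/|F_{N_k}|<1/k$. Starting from $z_0:=y$, I apply Definition~\ref{def:specification-G} at precision $\epsilon_k$ to the pair of points $\{z_{k-1},x_0\}$ with shadow sets $\hat F_0=\{e\}$ and $\hat F_1=F_{N_k}\setminus K_k$ (the separation conditions $K_{\epsilon_k}\hat F_0\cap \hat F_1=\emptyset$ and $K_{\epsilon_k}\hat F_1\cap \hat F_0=\emptyset$ hold by the symmetrisation in $K_k$), producing $z_k\in X$ with $d(z_k,z_{k-1})<\epsilon_k$ and $d(g(z_k),g(x_0))<\epsilon_k$ for every $g\in F_{N_k}\setminus K_k$.

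By summability and compactness, $(z_k)$ is Cauchy; set $z:=\lim_k z_k$, whence $d(z,y)\le\sum_k\epsilon_k<\delta$. To verify $z\in B(\mu)$, fix $\varphi\in C(X,\mathbb R)$, $\eta>0$, and let $\omega$ be its modulus of continuity. For each $n$, select $k=k(n)$ large enough that (i)~$F_n\subseteq F_{N_k}$, (ii)~$\omega(\epsilon_k)<\eta/3$, (iii)~$2\|\varphi\|_\infty|K_k|/|F_n|<\eta/3$, and (iv)~$\max_{g\in F_n}|\varphi(g(z))-\varphi(g(z_k))|<\eta/3$. A four-way triangle inequality comparing the $F_n$-averages of $\varphi$ along $z$, $z_k$ and $x_0$ with $\int\varphi\,d\mu$ then bounds the discrepancy by (iv)+(ii)+(iii) plus the Step~1 convergence error, and the latter is at most $\eta/3$ for $n$ large. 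Hence $z\in B(\mu)\cap B(y,\delta)$, proving density.

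The delicate step is item (iv): since $g\mapsto g(\cdot)$ is continuous for each fixed $g$ but not uniformly in $g\in G$, the Cauchy bound $d(z_k,z_{k-1})<\epsilon_k$ does not transfer uniformly along orbits. This is why I insist on $F_n$ being finite and choose $k$ dependent on both $n$ and $\eta$: finiteness of $F_n$ lets me use $g(z_\ell)\to g(z)$ for each $g\in F_n$ to produce (iv). Coordinating this diagonal choice with the specification scales $\epsilon_k$, the symmetrised separation sets $K_k$, and the F{\o}lner comparison $|K_k|/|F_{N_k}|<1/k$ is the heart of the construction; the inclusion of $K_{\epsilon_k}^{-1}\cup\{e\}$ in $K_k$ is tailored to make the inductive application of specification immediate.
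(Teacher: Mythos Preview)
Your argument has a genuine gap at the final verification that $z\in B(\mu)$. You correctly flag item~(iv) as delicate, but the difficulty is structural: conditions (iii) and (iv) pull in opposite directions and cannot in general be satisfied by the same $k$. For fixed $n$, securing (iv) requires $k$ large---how large depends on the continuity moduli of the finitely many maps $\{g:g\in F_n\}$, over which you have no quantitative control. On the other hand, since $\epsilon_k\downarrow 0$, the specification sets $K_{\epsilon_k}$ (hence $K_k$) are in general unbounded as $k\to\infty$; nothing in Definition~\ref{def:specification-G} bounds them. Thus pushing $k$ large to obtain (iv) will typically destroy (iii), and your construction provides no link between the rate at which $g(z_\ell)\to g(z)$ over $g\in F_n$ and the growth of $|K_k|$. (Condition~(i) is separately problematic, since tempered F{\o}lner sequences need not be nested.)

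The underlying reason is that in your inductive step $z_k$ is only asked to shadow $z_{k-1}$ \emph{at the identity}. Consequently the orbit-shadowing $d(g(z_k),g(x_0))<\epsilon_k$ on $F_{N_k}\setminus K_k$ is not inherited by $z_{k+1},z_{k+2},\dots$, and hence not by the limit $z$; this is precisely what forces you to appeal to the uncontrolled pointwise continuity in (iv). The paper's proof repairs exactly this: at each stage the new approximant $y_{j+1}$ is required to shadow the previous point $y_j$ on the \emph{entire} union of group elements used so far, while simultaneously shadowing the reference point of $B(\mu)$ on a fresh tile carved out of $F_{n_{j+1}}$ and separated from the past by the appropriate $K_\epsilon$. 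A telescoping sum then shows the limit $y_\infty$ shadows the reference point on every tile with geometrically decaying error, and the choices $|K_{\delta/2^k}|/|F_{n_k}|\to 0$ and $|F_{n_{k-1}}|/|F_{n_k}|\to 0$ ensure these tiles asymptotically exhaust the $F_{n_k}$. This cumulative shadowing on all previously used group elements is the missing ingredient in your construction.
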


\begin{proof}
As $X$ is compact, the space $C(X,\mathbb R)$ is separable. Given a dense sequence $(\varphi_\ell)_{\ell\, \in \,\mathbb{N}}$ in $C(X,\mathbb R)$ and $\ell \in \mathbb{N}$, by the ergodic theorem for amenable group actions \cite{Lin} there is a full $\mu$-measure subset $X_{\ell}\subset X$ such that, for every $x \in X_\ell$,
$$\lim_{n\,\to\, +\infty} \,\frac1{|F_n|} \,\sum_{g\,\in \,F_n} \,\varphi(g(x)) = \int \varphi\, d\mu.$$
Therefore, $\bigcap_{\ell\, \in \,\mathbb{N}} X_\ell \subset B(\mu)$ and $\mu(B(\mu)) = 1$. Take $z\in B(\mu)$ and observe that, as $\lim_{n\to\infty} |F_n|=+\infty$, for each $\varphi\in C(X,\mathbb R)$ and any compact set $F\subset G$ one has
$$\lim_{n\,\to\, +\infty} \; \frac1{|F_n \setminus F|} \,\sum_{g\,\in \,F_n\setminus F} \,\varphi(g(z)) = \int \varphi\, d\mu.$$
Now, for each $\vep>0$, let $K_\vep\subset G$ be given by the specification property. Consider the set $K_\vep^{-1}:=\{g^{-1} \colon g\in K_\vep\}$ and notice that, by the compactness of $X$, the modulus of uniform continuity $\zeta_\vep:= \sup\{ |\varphi(u)-\varphi(v)| \colon \, v\in B(u,\vep), \; u\in X  \}$, where $B(u,\vep)$ stands for the ball in $X$ centered in $u$ with radius $\varepsilon$, tends to zero as $\vep$ goes to $0$.

Fix an arbitrary point $x\in X$ and $\delta>0$. For each $k \in \mathbb{N}$ one can choose a positive integer $n_k \gg 1$ such that, as $k$ goes to $+\infty$,
$$\frac{K_{\delta/2^k}}{|F_{n_k}|} \to 0 \quad \quad  \text{and} \quad \quad \frac{|F_{n_{k-1}}|}{|F_{n_k}|} \to 0.$$
We claim that $B(\mu)$ intersects the closed ball $\overline{B(x,\delta)}$ of radius $\delta$ around $x$. The idea to prove this assertion is to shadow pieces of orbits of increasing size in the basin of $\mu$ which start in the ball $B(x,\delta)$. More precisely, taking $x_0=x$, $x_1=z$ and the finite sets $\hat F_0=\{id\}$ and $\hat F_1 = (K_{\delta/2}^{-1} [F_{n_1} \setminus \hat F_0]) \setminus K_{\delta/2}$, which satisfy ~\eqref{eq:spec1}, by the specification property there is $y_1 \in B(x,\delta/2)$ such that $d(\Gamma_g(y_1), \Gamma_g(z)) < \vep$ for every $g\in \hat F_1.$
In particular, 
$$\Big| \frac1{|\hat F_1|} \sum_{g\,\in \,\hat F_1} \varphi({g(y_1)}) - \frac1{|\hat F_1|} \sum_{g\,\in \,\hat F_1} \varphi({g(z)}) \Big| < \zeta_{\delta/2}.$$
Now set $x_2 = y_1$, $x_3=z$, $\hat F_2:= \hat F_0 \cup  \hat F_1$ and $\hat F_3 = (K_{\delta/2^2}^{-1} [F_{n_2} \setminus \hat F_2]) \setminus (K_{\delta/2^2} \hat F_2)$. Hence, if $\hat F_4:= \hat F_2 \cup  \hat F_3$, using the specification property once more one obtains a point $y_2 \in B(y_1, \delta/2^2)$ such that $d(\Gamma_g(y_2), \Gamma_g(z)) < \vep$ for every $g\in \hat F_3$, from which it is immediate that
$$\Big| \frac1{|\hat F_3|} \sum_{g\,\in \,\hat F_3} \varphi({g(y_1)}) - \frac1{|\hat F_3|} \sum_{g\,\in \,\hat F_3} \varphi({g(z)}) \Big| < \zeta_{\delta/2^2}.$$
Proceeding recursively, given $y_j \in B(y_{j-1},\delta/2^{j})$ and a finite set $\hat F_j\subset G$ containing $\{id\}$ we take
\begin{equation}\label{eq.disjoint-tile}
\hat F_{j+1}= (K_{\delta/2^j}^{-1} [F_{n_{j+1}} \setminus \hat F_j]) \setminus (K_{\delta/2^j} \hat F_j)
\end{equation}
and, by the specification property, we find $y_{j+1}\in B(y_{j},\delta/2^{j+1})$ satisfying
$$d(\Gamma_g(y_{j+1}), \Gamma_g(y_j)) < \vep \quad\text{for every } g\in \hat F_{j}$$
and
$$d(\Gamma_g(y_{j+1}), \Gamma_g(z)) < \vep \quad\text{for every } g\in \hat F_{j+1}.$$
Thus, by construction, $(y_k)_{k \in \mathbb{N}}$ is a Cauchy sequence in $\overline{B(x,\delta)}$, hence convergent to some point $y_\infty \in \overline{B(x,\delta)}$. Moreover, the choice of the sets $\hat F_k$ implies that
$$\lim_{k\,\to\, +\infty} \,\frac{|\hat F_k \, \Delta\, F_{n_k}|}{|F_{n_k}|} = 0$$
and
$$\Big| \frac1{|\hat F_{j+1}|} \sum_{g\,\in\, \hat F_{j+1}} \varphi({g(y_{j+1})}) - \frac1{|\hat F_{j+1}|} \sum_{g\,\in\, \hat F_{j+1}} \varphi({g(z)}) \Big| < \zeta_{\delta/2^j}$$

\noindent from which we conclude that $y_\infty$ belongs to the basin of $\mu$. 
\end{proof}

The following dichotomy is a direct consequence of Lemma~\ref{le:basin} and Theorem~\ref{CV_mod}.

\begin{corollary}\label{cor:minimal1}
Let $G$ be a countable amenable group, $(F_n)_{n\, \in \,\mathbb{N}}$ be a tempered F{\o}lner sequence, $X$ be a compact metric space and
$\Gamma : G\times X \to X$ be a continuous group action satisfying the specification property. Then for every $\varphi \in C(X, \mathbb{R})$ either
$$\int \varphi \, d\mu_1 = \int \varphi \, d\mu_2 \quad \quad \forall\, \mu_1, \, \mu_2 \in \mathcal M_G(X)$$
or
$$\Big\{x \in X  \colon \,\lim_{n\,\to\,+\infty} \,\frac1{|F_n|} \,\sum_{g\,\in \,F_n} \,\varphi(g(x))\, \text{ does not exist} \Big\}$$
is a Baire generic subset of $X$.

\end{corollary}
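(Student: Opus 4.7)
The plan is to reduce the dichotomy to Theorem~\ref{CV_mod} applied to the sequence $\Phi=(\varphi_n)_{n\,\in\,\mathbb N}$ of continuous bounded averages defined in~\eqref{eq:Folner-sequence}. If $\int\varphi\,d\mu_1=\int\varphi\,d\mu_2$ for every pair $\mu_1,\mu_2\in\mathcal M_G(X)$, then the first alternative is satisfied and there is nothing to prove, so from the outset I would assume the contrary: there exist two $G$-invariant probability measures with distinct $\varphi$-integrals.

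The next step is to pass from general invariant measures to ergodic ones. Since $X$ is a compact metric space and the action is continuous, $\mathcal M_G(X)$ is non-empty (recall that $G$ is amenable), compact and convex in the weak$^*$-topology of $\mathcal P(X)$, and its extreme points are precisely the ergodic $G$-invariant probability measures. By Choquet's theorem (or, equivalently, the ergodic decomposition for countable amenable group actions on compact metric spaces), the standing assumption promotes the two arbitrary measures to two \emph{ergodic} measures $\nu_1,\nu_2\in\mathcal M_G(X)$ such that $c_1:=\int\varphi\,d\nu_1$ and $c_2:=\int\varphi\,d\nu_2$ satisfy $c_1\neq c_2$.

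With $\nu_1$ and $\nu_2$ in hand, I would apply Lemma~\ref{le:basin} to conclude that the basins $B(\nu_1)$ and $B(\nu_2)$ are both dense in $X$. Since $\varphi$ is continuous, weak$^*$ convergence of the empirical measures $\frac{1}{|F_n|}\sum_{g\in F_n}\delta_{g(x)}$ to $\nu_i$ forces $\lim_{n\to+\infty}\varphi_n(x)=c_i$ for every $x\in B(\nu_i)$. Taking $A:=B(\nu_1)$, $B:=B(\nu_2)$ and $\varepsilon:=|c_1-c_2|/2>0$, we have $W_\Phi(a)=\{c_1\}$ and $W_\Phi(b)=\{c_2\}$ for every $(a,b)\in A\times B$, and therefore
\[
\sup_{r\,\in\,W_\Phi(a),\; s\,\in\,W_\Phi(b)}|r-s|\,=\,|c_1-c_2|\,>\,\varepsilon,
\]
which is exactly the $\Phi$-sensitivity condition of Definition~\ref{defTphisensitive}. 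The uniform bound $\|\varphi_n\|_\infty\leqslant\|\varphi\|_\infty<+\infty$ then allows me to invoke Theorem~\ref{CV_mod}, whose conclusion coincides with the second alternative of the dichotomy.

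The step I expect to require the most care is the ergodic decomposition, which the paper does not develop in detail for amenable group actions. It follows from the Choquet--Bishop--de Leeuw theory applied to the compact convex set $\mathcal M_G(X)$, granting that two invariant measures with different $\varphi$-integrals can always be replaced by two ergodic ones with the same property. Once that point is settled, the rest is a matter of aligning Lemma~\ref{le:basin}, Definition~\ref{defTphisensitive} and Theorem~\ref{CV_mod}.
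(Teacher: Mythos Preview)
Your proposal is correct and follows exactly the route the paper intends: the authors state that the corollary is ``a direct consequence of Lemma~\ref{le:basin} and Theorem~\ref{CV_mod}'', and you have filled in precisely those details, including the ergodic-decomposition step needed to pass from arbitrary invariant measures to ergodic ones so that Lemma~\ref{le:basin} applies. The only point the paper leaves implicit is that passage to ergodic measures, which you have correctly identified and justified via Choquet/ergodic decomposition for amenable actions on compact metric spaces.
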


We observe that Corollary~\ref{cor:minimal1} has an immediate consequence regarding the empirical measures distributed along elements of a tempered F{\o}lner sequence: under the assumptions of this corollary, one has that either the amenable group action is uniquely ergodic or the set 
$$\big\{x \in X \colon \,\Big(\frac1{|F_n|} \,\sum_{g\,\in \,F_n} \,\delta_{g(x)}\Big)_{n\, \in \, \mathbb{N}} \, \text{ does not converge in the weak$^*$ topology} \big\}$$
is Baire generic in $X$. This extends Furstenberg's theorem (see \cite[Theorem 3.2.7]{Ollagnier}), according to which an amenable group action by homeomorphisms on a compact metric space is uniquely ergodic if and only if the averages \eqref{eq:Folner-sequence} of every continuous function converge to a constant.

\smallskip

\begin{example}\label{ex:Anosov-abelian}

Consider the $2$-torus $\mathbb{T}^2$, with a Riemannian metric $d$, and the linear Anosov diffeomorphisms $g_1$ and $g_2$ of $\mathbb{T}^2$ induced by the matrices
$A_1 = \left(\begin{array}{cc}
2 & 1 \\
1 & 1 \\
\end{array}
\right)$ and
$A_2 = \left(\begin{array}{cc}
1 & 1 \\
1 & 0 \\
\end{array}
\right)$.
As $A_1 = A_2^2$, the maps $g_1$ and $g_2$ commute and induce a $\mathbb Z^2$-action on the 2-torus 
given by
\begin{eqnarray*}
\Gamma: \mathbb Z^2 \times \mathbb T^2 \quad &\to& \quad \mathbb T^2 \\
\big((m,n),\, x\big) \quad &\mapsto& \quad \big(g_1^m \circ g_2^n\big)(x).
\end{eqnarray*}
Moreover, this action satisfies the specification property. Let us see why.

\smallskip

Given $\vep>0$, let $k_\vep \in \mathbb{N}$ be provided by the specification property for the Anosov diffeomorphism $g_2$ and $K_\vep = [-k_\vep,\,k_\vep]^2 \subset \mathbb Z^2$. For any finite collection of points $x_0, x_1, x_2, \dots, x_\kappa$ in $\mathbb{T}^2$ and any choice of finite subsets $\hat F_0, \hat F_1, \hat  F_2, \dots, \hat  F_\kappa$ of $\mathbb Z^2$ satisfying the condition
\begin{equation}\label{eq:spec1Z2}
K_\vep \hat  F_i \cap \hat F_j = \emptyset \qquad \text{for every distinct\; } 0\leqslant i,j \leqslant \kappa
\end{equation}
we claim that there exists a point $x \in \mathbb{T}^2$ such that
\begin{equation*}
d(\Gamma_{(m,n)}(x), \Gamma_{(m,n)}(x_i)) < \vep \qquad \text{for every\; } (m,n) \in \bigcup_{0 \,\leqslant\, j \,\leqslant \,\kappa} \hat F_j.
\end{equation*}
Indeed, consider the map 
$\Theta: \mathbb Z^2 \to \mathbb Z$ given by $\Theta((m,n))=2m+n$ and notice that
\begin{equation}\label{eqrelTheta}
\Gamma((m,n), \cdot)= g_2^{\Theta((m,n))}(\cdot) \quad \quad \forall\, m,n\in \mathbb Z.
\end{equation}
The choice of the sets $K_\vep$ together with assumption \eqref{eq:spec1Z2} imply that $\Theta(K_\vep \hat  F_i) \cap \Theta(\hat F_j) = \emptyset$ for every $i\neq j$, and consequently
$$\inf\,\Big\{|u-v| \colon u \in \Theta(\hat  F_i), \,\, v \in \Theta(\hat  F_j)\Big\} \geqslant k_\vep.$$
To find $x \in \mathbb{T}^2$ as claimed, we are left to apply the specification property of $g_2$ (valid since $g_2$ is an Anosov diffeomorphism) and the equality \eqref{eqrelTheta}.

\smallskip

Consider now a tempered F{\o}lner sequence $(F_n)_{n\, \in \, \mathbb{N}}$ on $\mathbb Z^2$; for instance, the one defined by $F_n = [-n,n]^2 \subset \mathbb Z^2$. If $P$ is the common fixed point by both $g_1$ and $g_2$, then the probability measure $\mu_1 = \delta_P$ belongs to $\mathcal M_G(X)$, and the same happens with the Lebesgue measure (say $\mu_2$) on $\mathbb{T}^2$. Thus, given $\varphi \in C(\mathbb{T}^2, \mathbb R) \setminus \{0\}$ such that $\varphi \geqslant 0$ and $\varphi(P) = 0$, then $\int \varphi \, d\mu_1 = 0 \neq \int \varphi \, d\mu_2$, and therefore Corollary~\ref{cor:minimal1} asserts that
$$\Big\{x \in \mathbb{T}^2 \colon \,\lim_{n\,\to\,+\infty} \,\frac1{|F_n|} \,\sum_{g\,\in \,F_n} \,\varphi(g(x))\, \text{ does not exist} \Big\}$$
is a Baire generic subset of $\mathbb{T}^2$.

\end{example}

\noindent \textbf{Application 3}. \label{ex:free} A different setting concerns free Markov semigroups. Let $\Gamma$ be a free semigroup, finitely generated by $G =\{Id, g_1, g_2, \cdots, g_p\}$, where for every $1 \leqslant i \leqslant p$ the map $g_i: X \to X$ is a $\mu$-preserving transformation acting on a probability measure space $(X,\mathfrak{B}, \mu)$. The choice of $G$ endows $\Gamma$ with a norm $|\cdot|$ defined as follows: given $g \in \Gamma$, $|g|$ is the length of the shortest word over the alphabet $G$ representing $g$. Denote by $G_k$ the set $\{g \in \Gamma \colon \, |g| = k\}$. Now take $\varphi \in L^\infty(X, \mu)$ and consider the sequence of its spherical averages
$$k \in \mathbb{N} \quad \mapsto \quad s_k(\varphi) \,=\, \frac{1}{\# G_k} \,\sum_{g \, \in \, G_k}\, \varphi \circ g$$
where $\#$ stands for the cardinal of a finite set. 
Next consider the Ces\`aro averages of the previous spherical averages, that is,
\begin{equation}\label{eq:Cesaro-spherical}
n \in \mathbb{N} \quad \mapsto \quad \Phi_n \,=\, \frac{1}{n} \,\sum_{k=0}^{n-1}\,\frac{1}{\# G_k} \,\sum_{g \, \in \, G_k}\, \varphi \circ g.
\end{equation}
The main result of Bufetov \emph{et al} in \cite{BKK} establishes the pointwise convergence at $\mu$-almost everywhere of the sequence
$\big(\Phi_n\big)_{n  \, \in \, \mathbb{N}}$, as $n$ goes to $+\infty$, whenever $\Gamma$ is a Markov semigroup with respect to the generating set $G$. As a consequence of Theorem~\ref{CV_mod}, if there exists a dense set of points $x \in X$ such that $W_\Phi(x)$ is not a singleton, then the set of $\Phi$-irregular points is Baire generic in $X$. Let us check this information through an example.

\begin{example}\label{ex:z4-z6-free}
Consider the unit circle $\mathbb S^1=\{z \in \mathbb{C} \colon \, |z|=1\}$ and the self-maps of $\mathbb S^1$ given by $g_1(z) = z^4$ and $g_2(z)=z^6$. These transformations commute and have two fixed points in common, whose pre-orbits by $g_1$ and $g_2$ are dense in $\mathbb S^1$. Take the free (Markov) semigroup $G$ generated by $G_1=\{Id, g_1, g_2\}$, and let $\varphi$ be in $C(\mathbb S^1, \mathbb{R})$. It is known (cf. \cite{FV}) that, for a Baire generic subset of sequences $\omega = \omega_1 \omega_2 \cdots$ in $\{1, 2\}^\mathbb{N}$, the non-autonomous dynamics of $(g_{\omega_n} \circ \cdots \circ g_{\omega_1})_{n \, \in \, \mathbb{N}}$ has a Baire generic set of irregular points for the Ces\`aro averages
$$\varphi_n \,= \,\frac{1}{n} \,\sum_{j=0}^{n-1}\,\varphi \circ g_{\omega_j} \circ \cdots \circ g_{\omega_1}$$
of well chosen continuous potentials $\varphi$. Regarding the averages \eqref{eq:Cesaro-spherical} of $\varphi \in C(\mathbb S^1, \mathbb{R})$, in this case they are given by
$$\Phi_n \,= \,\frac{1}{n} \,\sum_{k=0}^{n-1}\,\frac{1}{2^k} \,\sum_{j=0}^k\, \binom kj \,(\varphi \circ g_1^j \circ g_2^{k-j}).$$
Let $z_0 \in \mathbb S^1$ be a common fixed point for $g_1$ and $g_2$. The sequence $\big(\Phi_n(z_0)\big)_{n \, \in \, \mathbb{N}}$ converges to $\varphi(z_0)$ as $n$ goes to $+\infty$. We claim that, for every $x$ in the pre-orbit $\mathcal O^-(z_0)$ of $z_0$ by the semigroup action (made up by the pre-images of $z_0$ by all the elements of the semigroup $G$), the sequence  $\big(\Phi_n(x)\big)_{n \, \in \, \mathbb{N}}$ converges to $\varphi(z_0)$ as well.

\smallskip

Given $x \in \mathcal O^-(z_0)$, there exists $g=g_{i_n}\circ \dots \circ g_{i_2}\circ g_{i_1}\in G$, where $i_j \in \{1,2\}$, such that $g(x)=z_0$. Yet, as $g_1$ and $g_2$ commute, one can simply write $g=g_1^{a} \circ g_2^{b}$ for some non-negative integers $a$ and $b$.
Assume that $a,b\geqslant 1$ (the remaining cases are identical). If $k \geqslant a+b$, the sum
\begin{equation}\label{eq.sum2k}
\frac{1}{2^k}  \,\sum_{j=0}^k\, \binom kj \,(\varphi \circ g_1^j \circ g_2^{k-j})(x)
\end{equation}
may be rewritten as
$$\frac{1}{2^k} \left[\sum_{j=0}^{a-1} \binom kj (\varphi \circ g_1^j \circ g_2^{k-j})(x) + \sum_{j=a}^{k-b} \binom kj (\varphi \circ g_1^{j} \circ g_2^{k-j})(x)
	+  \sum_{j=k-b+1}^{k} \binom kj (\varphi \circ g_1^j \circ g_2^{k-j})(x)\right].$$
The absolute values of the first and third terms in the previous sum are bounded above by
$$a\,\|\varphi\|_\infty\, \max\Big\{\binom k0, \cdots, \binom ka \Big\} \, 2^{-k}$$
and
$$b\,\|\varphi\|_\infty\, \max\Big\{\binom k{k-b+1}, \cdots, \binom kk \Big\} \, 2^{-k}$$
respectively, and both estimates converge to zero as $k$ goes to $+\infty$. Thus, their Ces\`aro averages also converge to $0$.
Regarding the middle term, as $g_1$ and $g_2$ commute and $z_0$ is fixed by $G$, one has
\begin{align*}
\frac{1}{2^k} & \,\sum_{j=a}^{k-b} \, \binom kj \,(\varphi \circ g_1^{j} \circ g_2^{k-j})(x) = \frac{1}{2^k} \,\sum_{j=a}^{k-b} \, \binom kj \,\big(\varphi \circ g_1^{j-a} \circ g_2^{k-j-b}\big)(g_1^a \circ g_2^b)(x) \\
& = \frac{1}{2^k} \,\sum_{j=a}^{k-b} \, \binom kj \,(\varphi \circ g_1^{j-a} \circ g_2^{k-j-b})(z_0) = \Big[ \frac{1}{2^k} \,\sum_{j=a}^{k-b} \, \binom kj \Big] \cdot
\varphi (z_0)
\end{align*}
whose limit as $k$ goes to $+\infty$ is precisely $\varphi(z_0)$. This proves that the sequence \eqref{eq.sum2k} converges to $\varphi(z_0)$, hence the same happens with its Ces\`aro averages.

\smallskip

Therefore, if $z_0 \in \mathbb S^1$ and $z_1 \in \mathbb S^1$ are the two common fixed points, $x \in \mathbb S^1$ belongs to the dense pre-orbit by the semigroup action of $z_0$ and $y \in \mathbb S^1$ belongs to the dense pre-orbit by the semigroup action of $z_1$, then
the sequence $\big(\Phi_n(x)\big)_n$ converges to $\varphi(z_0)$ and the sequence $\big(\Phi_n(y)\big)_n$ converges to $\varphi(z_1)$.
So, taking into account that $\mathcal O^-(z_0)$ is dense in $\mathbb S^1$ (since the pre-orbit by $g_1$ is dense), if we choose $\varphi\in C(\mathbb S^1,\mathbb R)$ such that $\varphi(z_0) \neq \varphi(z_1)$ then, by Theorem~\ref{CV_mod}, we conclude that ${\mathcal I}(\Phi)$ is Baire generic in $\mathbb S^1$.
\end{example}

\begin{remark}
As the generators of the semigroup action described in Example~\ref{ex:z4-z6-free} commute, one could consider, alternatively, the sequence
$(\Psi_n)_n$ where
$$
\Psi_n(\cdot) \,= \,\frac{1}{n^2} \,\sum_{k,\,\ell=0}^{n-1}\,\,(\varphi \circ g_1^k \circ g_2^{\ell})(\cdot).$$
If $z_0\in \mathbb S^1$ is a common fixed point for $G$ and $x \in \mathcal O^-(z_0)$, then this sequence can be rewritten as
$$
\frac{1}{n^2} \left[\sum_{(k,\,\ell)\,\in \,[a,n-1] \times [b,n-1]} (\varphi \circ g_1^\ell \circ g_2^{k-\ell})(x)
	+ \sum_{(k,\,\ell)\,\notin \,[a,n-1]\times [b,n-1]} (\varphi \circ g_1^\ell \circ g_2^{k-\ell})(x) \right]$$
where the sum is taken over pairs of integers $(k,\ell)$. The first term is equal to
$$
\frac{1}{n^2} \sum_{(k,\,\ell)\,\in \,[a,n-1]\times [b,n-1]} (\varphi \circ g_1^{\ell-a} \circ g_2^{k-b-\ell})(z_0) = \frac{(n-a)(n-b)}{n^2}\; \varphi (z_0)
$$
and converges to $\varphi (z_0)$ as $n$ goes to $+\infty$; meanwhile, the second term has absolute value bounded above by $\frac{a+b}n \, \|\varphi\|_\infty$, which goes to zero as $n$ tends to $+\infty$. Thus, if $z_0 \in \mathbb S^1$ and $z_1 \in \mathbb S^1$ are two common fixed points and $\varphi(z_0)\neq\varphi(z_1)$ then, by Theorem~\ref{CV_mod}, one concludes that ${\mathcal I}(\Psi)$ is Baire generic in $\mathbb S^1$.
\end{remark}

\subsection*{Acknowledgments}{LS is partially supported by Faperj, Fapesb-JCB0053/2013 and CNPq. MC and PV are supported by CMUP, which is financed by national funds through FCT-Funda\c c\~ao para a Ci\^encia e a Tecnologia, I.P., under the project UIDB/00144/2020. PV also acknowledges financial support from Funda\c c\~ao para a Ci\^encia e Tecnologia (FCT) - Portugal through the grant CEECIND/03721/2017 of the Stimulus of Scientific Employment, Individual Support 2017 Call.}

\def\cprime{$'$}


\end{document}